\theoremstyle{definition}
\newtheorem{theorem}{Theorem}[section]
\newtheorem{definition}[theorem]{Definition}
\newtheorem{lemma}[theorem]{Lemma}
\newtheorem{claim}[theorem]{Claim}
\newtheorem{notation}[theorem]{Notation}
\newtheorem{proposition}[theorem]{Proposition}
\newtheorem{condition}[theorem]{Condition}
\newtheorem{example}[theorem]{Example}
\newtheorem{remark}[theorem]{Remark}
\newcommand{\BE}{\mathsf{E}}
\newcommand{\BP}{\mathsf{P}}
\newcommand{\BR}{\mathbb{R}}
\newcommand{\ve}{\varepsilon}
\newcommand{\Var}{\mathrm{Var}}
\newcommand{\norm}[1]{\left\lVert#1\right\rVert}
\numberwithin{equation}{section}
\theoremstyle{plain}
\begin{document}

\begin{frontmatter}
\title{Large Deviations of Factor Models with Regularly-Varying Tails: Asymptotics and Efficient Estimation}

\begin{aug}
\author{\fnms{Farzad} \snm{Pourbabaee}*\ead[label=e1]{farzad@berkeley.edu}}
\and
\author{\fnms{Omid} \snm{Shams Solari}$^{\dagger}$
\ead[label=e3]{solari@berkeley.edu}
\ead[label=u1,url]{http://solari.stat.berkeley.edu}}\footnote[3]{Authors contributed equally to this manuscript.}


\affiliation{Department of Economics* and Department of Statistics$^{\dagger}$\\
University of California, Berkeley.}

\address{Department of Economics\\
University of California, Berkeley\\
\printead{e1}\\
\phantom{E-mail:\ }}

\address{University of California, Berkeley\\
Department of Statistics\\
\printead{e3}\\
\printead{u1}}
\end{aug}

\begin{abstract}
We analyze the \textit{Large Deviation Probability (LDP)} of linear factor models generated from non-identically distributed components with \textit{regularly-varying} tails, a large subclass of heavy tailed distributions. An efficient sampling method for LDP estimation of this class is introduced and theoretically shown to exponentially outperform the crude Monte-Carlo estimator, in terms of the coverage probability and the confidence interval's length. The theoretical results are empirically validated through stochastic simulations on independent non-identically Pareto distributed factors. The proposed estimator is available as part of a more comprehensive \texttt{Betta} package.
\end{abstract}


\begin{keyword}
\kwd{Monte-Carlo Estimation, Tail estimation, Conditional Monte-Carlo, Rare-Event Simulation, Stochastic Simulation}
\end{keyword}

\end{frontmatter}

\section{Introduction}

\textit{Large deviation probability (LDP)} estimation is a well-studied problem in various branches of research; from finance and economics, to particle physics and weather forecasting. Researchers are often interested in the probability of occurrence of catastrophes, i.e. major over-shoots or under-shoots of an outcome comprised of a few input resources. A prominent example of which is the estimation of LDP for the factor models. The most well-studied case is estimating the LDP of sums of iid random variables. Namely, estimating $\BP\left[X_1+\ldots+X_N >x\right]$ for finite $N$ where $x$ is very large.

Such LDP estimation is well-studied when factors are thin-tailed and/or their class of distribution functions is stable under addition, e.g. Gaussian or Gamma factors.
The statistical analysis is particularly straightforward in these cases, because of the available closed-form expressions for the right or left tail probability. However, the majority of cases do not fall in this line, as in many cases this stability does not hold, and we can not appeal to analytical expressions for the deviation probability. An important example is the class of heavy-tailed distributions. Loosely speaking, for this class of random variables the rare events occur more frequently than in a light-tailed distribution such as Gaussian. \cite{gabaix2016power} enumerated many examples in which Power law distribution emerges, such as firm and city size, income and wealth distribution, and CEO compensations. \cite{asmussen2000rare} proposed the first efficient algorithm for LDP estimation of linear factor models with heavy-tailed iid components. They introduce a \textit{Conditional Monte-Carlo} (\textit{CMC}) algorithm which benefits from conditioning on order statistics. \cite{chan2011rare} utilize the same estimator of \cite{asmussen2000rare} in specific settings. They apply it to \textit{independent but non-identically distributed} (\textit{ind}) case, where the factors' distribution is restricted to be either Weibull or Pareto. Independent from \cite{chan2011rare}, we developed a CMC algorithm based on a comprehensive asymptotic description of how rare events occur in \textit{ind} setting when factors are \textit{regularly varying}. In contrast to \cite{chan2011rare} we provide theoretical guarantees establishing the faster convergence of our estimation algorithm relative to crude Monte-Carlo.

Conditioning is quite appealing since the classical Monte-Carlo methods for estimating LDP fall short, precisely because a large number of samples need to be drawn to get non-zero realizations of the sampling event. However, there are more efficient tools to address this problem, such as \textit{importance sampling}. The idea is essentially to sample from another probability measure that assigns more weight to the regions where the sampling function takes larger values, and then correct for the transformation of the sampling measure. \cite{ackerberg2000importance} showed that importance sampling can reduce the computational burden for smoothing the simulated moments, as first suggested by \cite{mcfadden1989method}.  

However, for the case of heavy-tail distributions, the general\footnote[1]{Referred as ``general" because many of the known methods of measure change are based on using the moment generating function as the Radon-Nikodym derivative. However, there are potentially heuristic ways to choose the sampling distribution according to the particular type of the unknown target variable, which is sought to be estimated.} measure transformation methods such as importance sampling are not favorable at best and inapplicable at worst. One reason is that higher moments as well as moment generating function, which are the essence of measure transformation methods, do not exist for this class. Secondly, due to the degeneracy of the likelihood ratio in high-dimensional models, these methods are not useful (\cite{rubinstein2016simulation}). In this paper, a novel technique (based on conditional Monte-Carlo sampling) is introduced to address the problem of tail estimation for the sum of \textit{ind} random variables belonging to a large subclass of heavy tails, namely \textit{regularly-varying} (RV) distributions \footnote[2]{This class of distributions is defined  in depth in \cite{feller2008introduction} section 8.8.}.

In the context of insurance risk, \cite{goovaerts2005tail} studies the tail asymptotics of randomly weighted sum of iid Pareto factors. Further, \cite{foss2010sums} find asymptotic results for the sum of conditionally independent factors under rather stringent conditions on the structure of factors' dependencies. \cite{albrecher2006tail} and \cite{kortschak2009asymptotic} use Copulas to capture the dependence structure of the factors, and derived similar asymptotic results for the tail probability. The main contributions of this paper are to provide asymptotics for the deviation probability of the sum and maximum of independent, $\BR$-valued, RV random variables; and to propose an improved Monte-Carlo method for estimating these likelihoods.

Likelihood estimation of such extreme events arises in many places: notably the extreme losses or profits of a portfolio exposed to multiple independent risk factors. Another example studied in \cite{acemoglu2017microeconomic} is the frequency of large economic downturns, and significant GDP departures from equilibrium trend, caused by the heavy-tail nature of micro shocks, wherein independent factors with Pareto tails add up and create large swings. The challenge is that for all these cases, the extreme tail probabilities are excessively small. Therefore, finding non-trivial confidence intervals for them is not just a matter of their size, but more importantly how big or small are they relative to the sought probability. Namely, it is the relative error, the length of the confidence interval divided by the point estimate, that matters for reporting the estimation precision. For example, in the case of a simple indicator random variable $1_A$, suppose that we are after $\mu=\BP A$. The per sample variance for the crude Monte-Carlo is $\mu(1-\mu)$, which indeed goes to zero as $\mu \to 0$. However, the relative error (standard deviation over mean), roughly scales as $1/\sqrt{\mu}$, which becomes arbitrary large. The proposed estimation method in this paper fixes this issue, which arises in the crude Monte-Carlo, and advances a bounded relative error as the size of the target probability vanishes.

The paper is organized as follows. In section \ref{gauss}, the optimality conditions for estimation are defined and some notions for the Gaussian case are explored. Next, in section \ref{rvfactors}, we establish some results on the tail asymptotics of RV sums, and the CMC algorithm, along with its concentration analysis and comparisons with crude Monte-Carlo. In section \ref{market}, the implications for portfolios of many assets with heavy tails are studied. In section \ref{sec: simulations}, the exponential efficiency of our proposed CMC algorithm relative to the crude Monte-Carlo estimator is demonstrated through the simulations. The proofs of the propositions and theorems along with simulation details are presented in the appendix.

\section{Gaussian Factor Model} \label{gauss}
In this section, we present a brief overview of the use of importance sampling as a method of variance reduction in the estimation of a large deviation probability under Gaussian factors. This would serve as an introduction that paves the way for the main results of the paper. Assume there are $M$ assets available in the market whose returns are driven by $k$ latent factors $\phi=(\phi_1,\ldots,\phi_k)$. The return to the $i$-th security is captured as a linear combination of the latent factors and the idiosyncratic risk, which is assumed uncorrelated with $\phi$:
\begin{equation}
\eta_i = \langle \beta_i,\phi\rangle +\ve_i
\end{equation}
Asset returns are all evaluated over the time interval $[t,t+\tau]$, where $\tau$ is the investment horizon. Observations of high-frequency data confirm that the distribution of returns deviates more intensely from Gaussianity as the investment horizon becomes shorter. For the moment, suppose that $\tau$ is long enough that we can assume Normal distributions both for the factors and asset specific risks, in particular assume $\phi \sim \mathcal{N}(0,I_k)$ and $\ve_i \sim \mathcal{N}(0,\sigma^2_i)$ are mutually independent. Let $\xi$ represent the return to market index, which is typically calculated as the market-cap weighted sum of security returns, but here for simplicity is taken as the unweighted average of $M$ returns:
\begin{equation}
\label{eqmarket}
\xi = \frac{1}{M} \sum_{i=1}^M \eta_i = \bar{\eta} = \langle \bar{\beta},\phi \rangle + \bar{\ve},
\end{equation}
which has the Normal distribution $\mathcal{N}\left(0,\norm{\bar{\beta}_2}^2 + \sum_{i=1}^M \sigma^2_i/M^2\right)$. \\
One can think of periods of market turmoil as the times when the market index reflects large downswings and upswings, namely $\lvert \xi \rvert > \lambda$, and one might want to estimate the probability of these large fluctuations, e.g $\BP\left[\xi > \lambda \right]$ for large $\lambda$. Since no closed form expression for this integral exists, we have to resort to simulation methods. However, crude Monte-Carlo sampling from the distribution of $\xi$ requires drawing a large number of samples to find some that surpass the threshold $\lambda$; importance sampling can help to reduce the required number of sample points, or alternatively reduce the variance of the point estimator. Given that the cumulative generating function $\psi(\theta)$ exists for Gaussian distribution for all $\theta \in \BR$, one possible choice to get an appropriate importance sampling distribution is the exponential measure change through
\begin{equation}
\psi(\theta) = \log \BE\left[e^{\theta\xi} \right] = \frac{\theta^2}{2}\left(\norm{\bar{\beta}}_2^2 + \frac{1}{M^2}\sum_{i=1}^M \sigma^2_i\right).
\end{equation}

Specifically, If $\BP$ denotes the actual probability measure for $\xi$, the exponentially twisted measure $\BP_\theta$ is then obtained by
\begin{equation}
\label{radon}
\frac{\mathrm{d} \BP_\theta}{\mathrm{d} \BP} = e^{\theta \xi -\psi(\theta)}.
\end{equation}

Now we can generate $n$ samples from $\BP_\theta$, and form the following sample average, which represents the unbiased estimator under the new measure $\BP_\theta$:
\begin{equation}
\frac{1}{n}\sum_{i=1}^n 1_{[\xi_i > \lambda]} \frac{\mathrm{d} \BP}{\mathrm{d}  \BP_\theta}(\xi_i)
\end{equation}

Denote the per-sample estimator by $Z(\lambda)= 1_{[\xi > \lambda]} \frac{\mathrm{d} \BP}{\mathrm{d} \BP_\theta}(\xi)$. The next definition spells out two notions of relative error.
\begin{definition}
\label{errordef}
The estimator $Z(\lambda)$ has \textit{bounded relative error} if 
\begin{equation}
\limsup_{\lambda \to \infty} \frac{\Var(Z(\lambda))}{\BE\left[Z(\lambda) \right]^2} <\infty,
\end{equation}
and is \textit{logarithmically efficient} (a weaker notion) if for some $\ve>0$
\begin{equation}
\limsup_{\lambda \to \infty} \frac{\Var(Z(\lambda))}{\BE\left[Z(\lambda) \right]^{2-\ve}} =0.
\end{equation}
\end{definition}
The following result, which is proved in \cite{asmussen2008applied}, sheds light on the efficiency of exponential twisting for a certain value of $\theta$. 
\begin{theorem}
The exponential change of measure in \eqref{radon} is logarithmically efficient for the unique parameter $\theta$ that solves $\lambda = \psi'(\theta)$.
\end{theorem}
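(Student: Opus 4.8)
\medskip
\noindent\textbf{Proof proposal.}
Write $\mu(\lambda)=\BP[\xi>\lambda]$ and let $\BE_\theta,\Var_\theta$ denote expectation and variance under the twisted law $\BP_\theta$ of \eqref{radon}; since the samples in the estimator are drawn from $\BP_\theta$, these are exactly the quantities in Definition~\ref{errordef}. The plan is to recast logarithmic efficiency as a comparison of exponential decay rates. First, \eqref{radon} makes the estimator unbiased, $\BE_\theta[Z(\lambda)]=\mu(\lambda)$, so $\Var_\theta(Z(\lambda))\le\BE_\theta[Z(\lambda)^2]$ and it suffices to control $\BE_\theta[Z(\lambda)^2]/\mu(\lambda)^{2-\ve}$. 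Uniqueness of the root of $\lambda=\psi'(\theta)$ is clear because $\psi$ is strictly convex --- for the Gaussian cumulant generating function, $\psi''\equiv\norm{\bar{\beta}}_2^2+M^{-2}\sum_{i=1}^M\sigma_i^2>0$ --- so $\psi'$ is strictly increasing; and this root $\theta=\theta(\lambda)$ is positive once $\lambda>\psi'(0)=0$.

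Next I would carry out the single computation that drives everything. Inserting $\mathrm{d}\BP/\mathrm{d}\BP_\theta=e^{-\theta\xi+\psi(\theta)}$ into $Z(\lambda)$ and then changing back to $\BP$ via $\BE_\theta[g(\xi)]=\BE[g(\xi)e^{\theta\xi-\psi(\theta)}]$ yields
\[
\BE_\theta\!\left[Z(\lambda)^2\right]=e^{\psi(\theta)}\,\BE\!\left[1_{\{\xi>\lambda\}}\,e^{-\theta\xi}\right]\le e^{\psi(\theta)-\theta\lambda}\,\mu(\lambda)=e^{-I(\lambda)}\,\mu(\lambda),
\]
where the inequality uses $e^{-\theta\xi}\le e^{-\theta\lambda}$ on $\{\xi>\lambda\}$ (here $\theta>0$), and $I(\lambda):=\sup_{t}\{t\lambda-\psi(t)\}=\theta\lambda-\psi(\theta)$, the supremum being attained at our $\theta$ precisely by the defining first-order condition $\lambda=\psi'(\theta)$. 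Thus the optimal twist is exactly the one that gives the second moment the exponential rate $2I(\lambda)$, i.e.\ twice the rate of $\mu(\lambda)$. To finish, one compares $e^{-I(\lambda)}$ with $\mu(\lambda)$ itself: Chernoff's bound gives $\mu(\lambda)\le e^{-I(\lambda)}$, while the Mills-ratio lower bound for the Gaussian tail gives $\mu(\lambda)\ge c\,\lambda^{-1}e^{-I(\lambda)}$ for large $\lambda$ (with $I(\lambda)=\lambda^2/(2\psi''(0))$), hence $e^{-I(\lambda)}\le C\lambda\,\mu(\lambda)$, and for every $\ve>0$
\[
\frac{\Var_\theta(Z(\lambda))}{\BE_\theta[Z(\lambda)]^{2-\ve}}\le\frac{e^{-I(\lambda)}\mu(\lambda)}{\mu(\lambda)^{2-\ve}}=e^{-I(\lambda)}\mu(\lambda)^{\ve-1}\le C\,\lambda\,\mu(\lambda)^{\ve}\to 0\quad(\lambda\to\infty),
\]
because $\mu(\lambda)$ decays exponentially in $\lambda^2$ while $\lambda$ grows only polynomially. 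This is logarithmic efficiency; it also shows that the prefactor $\lambda$ is precisely what blocks the stronger bounded-relative-error property.

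I expect the only genuine subtlety to be this rate-matching step: one must pair the Chernoff upper bound on $\mu(\lambda)$ with a lower bound of the same exponential order, so that $\BE_\theta[Z(\lambda)^2]\asymp\BE_\theta[Z(\lambda)]^2$ up to a polynomial factor and no other twist can do as well. For Gaussian $\xi$ this lower bound is the one-line Mills estimate; for a general $\BR$-valued factor with $\psi$ finite in a neighbourhood of $\theta$, the same scheme works after replacing it by the Bahadur--Rao exact tail asymptotics $\mu(\lambda)\asymp\lambda^{-1/2}e^{-I(\lambda)}$ (valid under the non-lattice condition), which again costs only a polynomial factor and leaves the conclusion intact --- this is essentially the argument in \cite{asmussen2008applied}.
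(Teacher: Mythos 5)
The paper does not prove this theorem; it merely cites \cite{asmussen2008applied} for it, so there is no in-paper argument to compare against. Your proof correctly reproduces the standard textbook argument: the change-of-measure computation $\BE_\theta[Z(\lambda)^2]=e^{\psi(\theta)}\BE[1_{\{\xi>\lambda\}}e^{-\theta\xi}]\le e^{-I(\lambda)}\mu(\lambda)$ with $I(\lambda)=\theta\lambda-\psi(\theta)$, followed by the Mills-ratio lower bound $\mu(\lambda)\ge c\lambda^{-1}e^{-I(\lambda)}$ to match exponential rates, giving $\Var_\theta(Z)/\BE_\theta[Z]^{2-\ve}\le C\lambda\mu(\lambda)^{\ve}\to 0$ for every $\ve>0$; and your observation that the residual polynomial factor is exactly what prevents bounded relative error is also correct (an exact Gaussian calculation gives $\BE_\theta[Z^2]/\mu^2\asymp\lambda$).
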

As a result of this theorem, the optimal parameter for the measure change is 
\begin{equation}
\label{normalopt}
\theta^* = \frac{\lambda}{\norm{\bar{\beta}}_2^2 + \sum_{i=1}^M \sigma^2_i/M^2}.
\end{equation}
Having stated this theorem, the following lines summarize the simulation steps for the likelihood estimation of the market index large fluctuations in the Gaussian case:

\begin{enumerate}
\item Find $\theta^*$ from \eqref{normalopt}.
\item Draw random samples $\xi_i$, $i=1,\ldots,M$ from $\BP_{\theta^*}$.
\item Calculate $\frac{1}{n}\sum_{i=1}^n 1_{[\xi_i > \lambda]} e^{\psi(\theta^*)-\theta^* \xi_i}$ as an estimator of $\BP\left[ \xi>\lambda \right]$.
\end{enumerate}
As a result of twisting the sampling distribution, the relative error now scales as $\BP\left[ \xi > \lambda\right]^{-\ve/2}$, compared to $\BP\left[ \xi>\lambda\right]^{-1/2}$ for the classical Monte-Carlo. Equivalently, this boost shows us how to achieve a certain level of relative error with fewer sample points. However, this machinery can not always be employed, because the moment generating function need not always exist. Therefore, to find the optimal measure change we have to appeal to heuristic methods, or use other Monte-Carlo methods as explained further in the proceeding sections.


\section{Regularly-Varying Factors} \label{rvfactors}
In this section we study the consequences of dealing with independent factors with heavier tails than Gaussians. In particular, the factors are assumed to have regularly-varying tails, for example ones with Pareto tails. This class of distribution functions is contained in the larger family of sub-exponential distributions as defined below.
\begin{definition}
The distribution $F$ of a non-negative random variable $X$ is called sub-exponential, if
\[
\label{eq: subexpdef}
\lim_{x \to \infty}\frac{\BP\left[X_1+\ldots+X_N > x \right]}{\BP\left[X_1 >x \right]} = N ~ \text{for all} ~ N \geq 1,
\]
where $X_i$'s are iid copies drawn from $F$ \footnote{For more, check definition 1.3.3 in \cite{embrechts2013modelling}.}.
\end{definition}
This definition extends to probability distributions on the entire real line by restriction to the positive and negative halves. Then, the random variable $X\sim F$, taking values in $\BR$, is called sub-exponential if $X_+ = (X \vee 0)$ and  $X_- = -(X \wedge 0)$ are both sub-exponentials. Equation \eqref{eq: subexpdef} says that the probability that the sum of $N$ iid sub-exponential random variables exceeds a certain threshold is roughly $N$ times the probability that one of them exceeds that level. The question is thus what happens if the random variables are independent and individually sub-exponential but not necessarily identically distributed? Is the deviation probability for the sum related to the sum of deviation probabilities of the summands, and if so, under what conditions? As pointed out in the introduction, variations of these questions are studied under different conditions for the factors. 

In the remainder of this paper, we restrict ourselves to the case of sum of non-identical, independent, real-valued random variables. We answer this question under a mild condition, which is typically satisfied by long-tailed distributions.
\begin{condition} 
\label{hcond}Given the distribution $F$, there exists an eventually increasing function $h(x)$ such that $\lim_{x\to \infty} h(x) = \infty$ and \begin{equation}
\lim _{x\to \infty}\frac{\bar{F}(x+h(x))}{\bar{F}(x)}=1,
\end{equation}
where $\bar{F}(x):= 1-F(x)$.
\end{condition}
\begin{example} Suppose $X$ is Power law distributed with coefficient $\mu$, i.e $\BP\left[X> x \right] \propto x^{-\mu}$. Then, one can check that $h(x) = x ^{\delta}$, for any $0<\delta <1$, satisfies condition \ref{hcond}.
\end{example}
The following notation is used throughout the paper. 
\begin{notation}[Asymptotic equivalence] 
$f(x) \sim g(x)$ if $f(x)/g(x) \to 1$, as $x \to \infty$.
\end{notation}
The next definition expresses the notion of the RV distribution; see \cite{feller2008introduction} for more elaboration. 
\begin{definition}[Regularly-Varying (RV) distribution]
A distribution function $F$ has a \textit{regularly varying} tail, if $\bar{F}(x) \sim L(x)/x^{\alpha}$ as $x \to \infty$, where $\alpha>0$ and $L(\cdot)$ varies \textit{slowly} at infinity, i.e.
\begin{equation}
\lim_{x \to \infty} \frac{L(tx)}{L(x)}=1 ~ \text{for all} ~ t>0.
\end{equation}
\end{definition}
Functions such as $\log(x)$, $\log (\log(x))$ and any convergent function to a bounded level are examples of slow-variation.
The RV property depends only on the behavior of the distribution at infinity, so it does not matter how it behaves at intermediate points. One stylized observation about this family of distributions is that they  have finite moments of order less than $\alpha$, but not more. This will restrain us from using moment generating function to obtain large deviation results.
\begin{claim}
For all distribution functions of regular variation, we can take $h(x) = x^{\delta}$ with any $0<\delta <1$, and  condition \ref{hcond} will hold. The corollary of theorem 1 in section 8.8 of \cite{feller2008introduction} paves the way to prove this claim, which allows us to represent the slowly varying function $L(\cdot)$ as
\begin{equation}
L(x) = a(x) \exp\left(\int_1^x \frac{\ve(y)}{y} dy \right),
\end{equation}
where $\ve(x) \to 0$ and  $a(x) \to c$ as $x \to \infty$. Therefore,
\begin{equation}
\lim_{x \to \infty} \frac{L(x+x^\delta)}{L(x)} = \lim_{x\to \infty} \frac{a(x+x^\delta)}{a(x)} \lim_{x \to 
\infty}\exp\left(\int_x^ {x+x^\delta} \frac{\ve(y)}{y}dy \right),
\end{equation}
where the first term converges to 1, and the second term's exponent is approaching zero, because
\begin{equation}
\left| \int_x^ {x+x^\delta} \frac{\ve(y)}{y}dy \right|  \leq \frac{\sup_{y \in (x,x+x^\delta)} |\ve(y)| }{x^{1-\delta}} \to 0, ~~ \text{as} ~ x\to \infty.
\end{equation}
\end{claim}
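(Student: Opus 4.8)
The plan is to peel off the slowly varying part of $\bar{F}$ and reduce everything to the Karamata-type representation theorem quoted from \cite{feller2008introduction}. First I would observe that $h(x)=x^\delta$ is increasing on $(0,\infty)$ and satisfies $h(x)\to\infty$ whenever $\delta>0$, so the only genuine content of Condition \ref{hcond} is the asymptotic relation $\bar{F}(x+x^\delta)/\bar{F}(x)\to 1$. Using $\bar{F}(x)\sim L(x)/x^\alpha$ together with $x+x^\delta\to\infty$, I would rewrite
\[
\frac{\bar{F}(x+x^\delta)}{\bar{F}(x)}\sim\frac{L(x+x^\delta)}{L(x)}\cdot\left(1+x^{\delta-1}\right)^{-\alpha}.
\]
Because $\delta<1$, the factor $(1+x^{\delta-1})^{-\alpha}$ converges to $1$, so the claim reduces to showing $L(x+x^\delta)/L(x)\to 1$.

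For the slowly varying factor I would invoke the corollary of Theorem 1 in Section 8.8 of \cite{feller2008introduction}, which represents $L$ as $L(x)=a(x)\exp\!\left(\int_1^x \varepsilon(y)/y\,dy\right)$ with $a(x)\to c\in(0,\infty)$ and $\varepsilon(y)\to 0$. This yields
\[
\frac{L(x+x^\delta)}{L(x)}=\frac{a(x+x^\delta)}{a(x)}\,\exp\!\left(\int_x^{x+x^\delta}\frac{\varepsilon(y)}{y}\,dy\right),
\]
and the prefactor tends to $c/c=1$. For the exponential I would estimate the integral crudely, $\left|\int_x^{x+x^\delta}\varepsilon(y)/y\,dy\right|\le\left(\sup_{y\ge x}|\varepsilon(y)|\right)x^{\delta-1}$, using that $1/y\le 1/x$ on the interval and that the interval has length $x^\delta$; since $\sup_{y\ge x}|\varepsilon(y)|\to 0$ and $x^{\delta-1}\to 0$, the exponent vanishes and the exponential tends to $1$. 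Combining the two displays completes the argument.

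The step I expect to require the most care — and the reason the restriction $\delta<1$ appears — is the control of $\varepsilon$ over the window $[x,x+x^\delta]$, whose length diverges; the saving grace is that its length is negligible compared with $x$, so the integrand, which is bounded and vanishing, integrates to $o(1)$. A self-contained alternative would be to apply the uniform convergence theorem for slowly varying functions on the compact set $[1,2]$ and evaluate it at the ratio $1+x^{\delta-1}$, which tends to $1$; but the representation-based estimate above is the most direct route and is precisely what Feller's corollary supplies.
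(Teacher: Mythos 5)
Your argument is correct and follows the same route as the paper: invoke the Karamata representation for $L$, split off the prefactor $a(x+x^\delta)/a(x)\to 1$, and bound the integral of $\ve(y)/y$ over the window by $\bigl(\sup|\ve|\bigr)\,x^{\delta-1}\to 0$. You are in fact slightly more careful than the paper in that you explicitly dispose of the polynomial factor $\bigl(1+x^{\delta-1}\bigr)^{-\alpha}\to 1$ before reducing to the slowly varying part, a step the paper leaves implicit; the uniform-convergence-theorem alternative you sketch at the end is a legitimate shortcut but is not the route the paper takes.
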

\begin{remark}
	The next two results on large deviation of sum and maximum of a sequence of \textit{not} necessarily identical, independent and $\BR$-valued random variables, are built on the well-known properties of iid, $\BR^+$-valued RV random variables in \cite{embrechts2013modelling}\footnote{Precisely, for nonnegative, sub-exponential and iid random variables $(X_i)_{i=1,\ldots,N}$, $\BP\left[X_1+\ldots+X_N >x \right] \sim \BP\left[\max_{1\leq i \leq N} X_i >x\right]\sim N \BP\left[X_i>x\right]$ when $x \to \infty$; as stated in \cite{embrechts2013modelling} section 1.3.2.}.
\end{remark}
The following theorem assumes condition \ref{hcond}, and displays an asymptotic equivalence result for the tail probability of an independent RV sum.
\begin{theorem}
\label{tailequiv}
Suppose $X_1,\ldots,X_N$ are independent random variables in $\BR$, such that:
\begin{enumerate}[label=(\roman*)]
\item \label{ass1} An RV distribution $F$ exists, where $\bar{F}_i(x) \sim c_i \bar{F}(x)$ for all $i$'s and at least one $c_i \neq 0$,
\item \label{ass2} A function $h(\cdot)$ exists that satisfies condition \ref{hcond} for $F$,
\end{enumerate}
then the following asymptotic result holds:
\begin{equation}
\label{eq:tailequiv}
\BP\left[X_1+\ldots +X_N > x \right] \sim \sum_{i=1}^N \BP\left[X_i > x \right] \sim \left(\sum_{i=1}^N c_i \right) \bar{F}(x)
\end{equation}
\end{theorem}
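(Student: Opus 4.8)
The plan is to prove the two asymptotic equivalences in \eqref{eq:tailequiv} separately. The right-hand one is immediate: $\sum_{i=1}^N\BP[X_i>x]=\sum_{i=1}^N\bar F_i(x)\sim\bigl(\sum_{i=1}^N c_i\bigr)\bar F(x)$ by assumption \ref{ass1} (reading $\bar F_i(x)\sim 0\cdot\bar F(x)$ as $\bar F_i(x)=o(\bar F(x))$ when $c_i=0$), and this limit is strictly positive because every $c_i\ge 0$ and at least one is nonzero. The content is therefore in the left-hand equivalence, which I would obtain from matching $\limsup$ and $\liminf$ bounds for $\BP[S_N>x]/\bar F(x)$, writing $S_N=X_1+\cdots+X_N$.

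For the upper bound I would fix a small $\gamma>0$ with $(N-1)\gamma<1$ and use the deterministic inclusion
\[
\{S_N>x\}\subseteq\bigcup_{i=1}^N\bigl\{X_i>(1-(N-1)\gamma)x\bigr\}\ \cup\ \bigcup_{1\le i<j\le N}\bigl(\{X_i>\gamma x\}\cap\{X_j>\gamma x\}\bigr),
\]
valid because if $S_N>x$ and no summand exceeds $(1-(N-1)\gamma)x$, then at least two summands must exceed $\gamma x$. Taking probabilities, using independence on the second family, and then assumption \ref{ass1} together with the regular variation of $\bar F$ (so $\bar F(\lambda x)\sim\lambda^{-\alpha}\bar F(x)$ for fixed $\lambda>0$), the first sum is asymptotically $(1-(N-1)\gamma)^{-\alpha}\bigl(\sum_i c_i\bigr)\bar F(x)$, while each term of the second sum is $O(\bar F(x)^2)=o(\bar F(x))$. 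Dividing by $\bar F(x)$ and sending $\gamma\downarrow 0$ gives $\limsup_x\BP[S_N>x]/\bar F(x)\le\sum_i c_i$.

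For the lower bound — the only place the hypotheses beyond regular variation are really used — I would introduce, for each $i$, the event $B_i=\{X_i>x+(N-1)h(x)\}\cap\bigcap_{j\ne i}\{X_j>-h(x)\}$, on which $S_N>x$, and bound $\BP[S_N>x]\ge\BP\bigl(\bigcup_i B_i\bigr)\ge\sum_i\BP(B_i)-\sum_{i<j}\BP(B_i\cap B_j)$ by Bonferroni. Independence gives $\BP(B_i)=\bar F_i\bigl(x+(N-1)h(x)\bigr)\prod_{j\ne i}\BP[X_j>-h(x)]$; the product tends to $1$ since each $X_j$ is a.s.\ finite and $h(x)\to\infty$, and $\bar F_i\bigl(x+(N-1)h(x)\bigr)\sim c_i\,\bar F(x)$ once one knows $\bar F\bigl(x+kh(x)\bigr)\sim\bar F(x)$ for each fixed $k$ — which follows by iterating Condition \ref{hcond} (using that $h$ is eventually increasing, so $h(x+h(x))\ge h(x)$) or, since $F$ is RV, simply by taking $h(x)=x^\delta$ as in the Claim and invoking the uniform convergence theorem for slowly varying functions. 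Each $B_i\cap B_j$ forces two summands past $x+(N-1)h(x)$, so $\BP(B_i\cap B_j)=O(\bar F(x)^2)=o(\bar F(x))$. Hence $\liminf_x\BP[S_N>x]/\bar F(x)\ge\sum_i c_i$, and combined with the upper bound this proves $\BP[S_N>x]\sim\bigl(\sum_i c_i\bigr)\bar F(x)$.

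I expect the crux to be the lower bound: when a single summand is conditioned to be huge, the remaining real-valued summands could a priori be very negative and spoil $\{S_N>x\}$; Condition \ref{hcond} is exactly the long-tailedness that lets the distinguished summand carry an extra slack of order $h(x)$ at asymptotically no probabilistic cost, while $h(x)\to\infty$ makes the constraint $X_j>-h(x)$ on the others asymptotically costless. Everything else is bookkeeping: the deterministic set inclusions, the observation $\bar F(\lambda x)\sim\lambda^{-\alpha}\bar F(x)$, and the fact that every event in which two or more summands are simultaneously large has probability $o(\bar F(x))$ — the last point being where independence is indispensable and where dropping the identical-distribution assumption costs nothing. (An alternative route is induction on $N$: prove the $N=2$ case by the same split, observe that $X_1+\cdots+X_{N-1}$ again has a tail asymptotic to a constant multiple of $\bar F$, and apply the $N=2$ case to its sum with $X_N$; I find the direct argument cleaner.)
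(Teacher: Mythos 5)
Your proposal is correct and, while the lower bound matches the paper's in spirit, the overall route is genuinely different. The paper proves the case $N=2$ directly and appeals to induction for general $N$; you argue directly for all $N$ via one deterministic set inclusion on each side (and explicitly note the inductive alternative yourself). More substantively, the two upper bounds differ. The paper splits with the slowly-growing slack $h(x)$:
\[
\BP[X_1+X_2>x]\le\BP[X_1>x-h(x)]+\BP[X_2>x-h(x)]+\BP\bigl[h(x)<X_1\le x-h(x),\,X_2>x-X_1\bigr],
\]
bounds the cross term by $\BP[X_1>h(x)]\,\BP[X_2>h(x)]\sim c_1c_2\,\bar F(h(x))^2$, and then needs an auxiliary lemma (Lemma~\ref{negl}) showing $\bar F(h(x))^2=o(\bar F(x))$ for a suitable $h(x)=x^\delta$ — a fact that requires the Karamata representation / Feller's $x^{-\ve}<L(x)<x^\ve$ bound. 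You instead use a fixed proportional slack $\gamma x$, which makes the cross term $O(\bar F(\gamma x)^2)=O(\bar F(x)^2)=o(\bar F(x))$ with no auxiliary work, at the cost of leaning on the RV scaling $\bar F(\lambda x)\sim\lambda^{-\alpha}\bar F(x)$ and a final $\gamma\downarrow 0$ limit. Your version is shorter and avoids the extra lemma; the paper's $h(x)$-based bookkeeping is closer to how the argument is organized for general long-tailed/subexponential classes, though as written both ultimately rest on regular variation. For the lower bound you and the paper do essentially the same thing: give the distinguished summand an $O(h(x))$ head start, constrain the others above $-h(x)$, apply Bonferroni, and kill the pairwise-intersection terms as $o(\bar F(x))$; your iteration of Condition~\ref{hcond} to get $\bar F(x+kh(x))\sim\bar F(x)$ (using eventual monotonicity of $h$) is a correct and slightly more careful statement of the step the paper takes implicitly.
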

Another interesting feature of sub-exponential distributions is the so-called \textit{catastrophe principle}, that roughly states that the iid sum of non-negative sub-exponential random variables is large if and only if one of them is large. To put it in a more precise way, here is the formal definition of this property:
\begin{definition}
\label{catasprop}
The distribution function $F$ with support on $[0,\infty)$ is said to satisfy the catastrophe principle, if
\begin{equation}
\BP\left[\max_{1 \leq i \leq N}X_i > x \right] \sim \BP\left[X_1+\ldots+X_N > x \right],~ \text{as}~ x \to \infty,
\end{equation}
where $X_1,\ldots,X_N$ are iid draws from $F$.
\end{definition}
In particular, the sub-exponential family has this property. However, we want to know what happens to the maximum factor under the more general conditions of theorem \ref{tailequiv}: when the random variables are independently drawn from non-identical distributions, and  can take negative as well as positive values. The next theorem examines the behavior of the maximum term up to a certain constant.
\begin{theorem}
\label{cat}
Suppose $X_1,\ldots,X_N$ are independently drawn from $F_1 \ldots, F_N $, and take values in $\BR$. Then, under the same conditions \ref{ass1} and \ref{ass2} of theorem \ref{tailequiv}, the following asymptotic result holds:
\begin{equation}
\begin{split}
\sum_{i=1}^N \BP\left[X_i >x \right]+ o(\bar{F}(x))& \leq \BP\left[\max_{1 \leq i \leq N}X_i > x \right] \\
&  \leq (1-e^{-1})^{-1}\sum_{i=1}^N \BP\left[X_i >x \right]+o(\bar{F}(x))
\end{split}
\end{equation}
\end{theorem}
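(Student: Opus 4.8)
The plan is to sidestep any convolution analysis and work from the exact identity that independence provides, namely $\BP[\max_{1\le i\le N}X_i\le x]=\prod_{i=1}^N(1-p_i(x))$ with $p_i(x):=\BP[X_i>x]$, and then to sandwich $1-\prod_i(1-p_i(x))$ between quantities of the form ``constant $\times\sum_i p_i(x)$'' using only elementary inequalities. Assumption \ref{ass1} enters only through the facts that each $p_i(x)\to 0$ (being a distribution tail) and $p_i(x)=O(\bar F(x))$, with $p_i(x)=o(\bar F(x))$ when $c_i=0$; assumption \ref{ass2}, the long-tailedness function $h$, does not appear to be needed for the maximum and is presumably carried over only so that the statement matches the hypotheses of Theorem \ref{tailequiv}.

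For the left-hand (lower) bound I would invoke Bonferroni's inequality, $\BP[\max_i X_i>x]=\BP[\bigcup_i\{X_i>x\}]\ge\sum_i p_i(x)-\sum_{i<j}p_i(x)p_j(x)$. Each product $p_i(x)p_j(x)$ is $o(\bar F(x))$ --- if $c_i\neq 0$ then $p_i(x)\sim c_i\bar F(x)$ while $p_j(x)\to 0$, and if $c_i=c_j=0$ then the product is $o(\bar F(x)^2)$ --- and there are only finitely many pairs, so the correction term is $o(\bar F(x))$; this yields $\sum_i\BP[X_i>x]+o(\bar F(x))\le\BP[\max_i X_i>x]$. A second, ``multiplicative'' route to a bound of the same shape --- the one that makes the constant $1-e^{-1}$ transparent, and the one that would be forced upon us if $N=N(x)$ were allowed to grow so that $\sum_i p_i(x)$ need not vanish --- is to combine $\prod_i(1-p_i)\le e^{-\sum_i p_i}$ with the concavity fact $1-e^{-s}\ge(1-e^{-1})s$ for $s\in[0,1]$, giving $\BP[\max_i X_i>x]\ge(1-e^{-1})\sum_i p_i(x)$ as soon as $\sum_i p_i(x)\le 1$.

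For the right-hand (upper) bound, the plain union bound already gives $\BP[\max_i X_i>x]\le\sum_i p_i(x)$, hence a fortiori $\le(1-e^{-1})^{-1}\sum_i\BP[X_i>x]$; if one prefers to stay multiplicative one may instead pass through $1-\prod_i(1-p_i)\le 1-e^{-\sum_i p_i}\le\sum_i p_i$. To finish I would rewrite $\sum_i p_i(x)=\sum_i\BP[X_i>x]$ and, using assumption \ref{ass1} together with ``at least one $c_i\neq 0$'', observe that $\sum_i\BP[X_i>x]\sim(\sum_i c_i)\bar F(x)$ is of exact order $\bar F(x)$, so the $o(\bar F(x))$ slack at both ends is of strictly smaller order than the main term.

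The one place that warrants genuine care --- and the main obstacle if the statement is to be pushed beyond fixed $N$ --- is the bookkeeping of the error: one must verify that the Bonferroni remainder is truly $o(\bar F(x))$ rather than merely $O(\bar F(x))$, which is immediate for fixed $N$ but for $N=N(x)\to\infty$ becomes the crux, since then $\sum_{i<j}p_i(x)p_j(x)\asymp(\sum_i p_i(x))^2$ and one needs $\sum_i p_i(x)=o(\bar F(x)^{1/2})$. In that regime the Bonferroni step must be replaced by the $1-e^{-s}\ge(1-e^{-1})s$ estimate above, which is almost certainly the origin of the constant appearing in the theorem. The remaining routine check is that indices with $c_i=0$ may simply be discarded, since they contribute only $o(\bar F(x))$ to every sum or product in which they occur.
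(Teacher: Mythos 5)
Your proof is correct and takes a genuinely different, and in fact more elementary, route than the paper. The paper obtains both bounds from the product identity by passing through exponentials: the lower bound uses $1-p_i\le e^{-p_i}$ and then expands $1-e^{-\sum p_i}$ by Taylor, and the upper bound uses the affine estimate $e^{-y}\le 1-(1-e^{-1})y$ on $[0,1]$ (equivalently $1-p_i\ge\exp(-(1-e^{-1})^{-1}p_i)$), which is precisely how the otherwise-mysterious constant $(1-e^{-1})^{-1}$ enters. You instead get the lower bound directly from Bonferroni, $\BP[\bigcup_i\{X_i>x\}]\ge\sum_i p_i-\sum_{i<j}p_ip_j$, with the cross terms absorbed into $o(\bar F(x))$ since $N$ is fixed and each $p_i\to 0$; and the upper bound from the plain union bound, $\BP[\max_i X_i>x]\le\sum_i p_i$. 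Two things are worth flagging. First, your union-bound upper estimate has leading constant $1$, not $(1-e^{-1})^{-1}$, so for fixed $N$ your argument actually yields the tight statement $\BP[\max_i X_i>x]\sim\sum_i\BP[X_i>x]$ directly, without the ``more stringent conditions'' the paper's remark invokes to sharpen the constant; your proof is strictly stronger than what the theorem claims. Second, your diagnosis of why the paper reaches for the multiplicative machinery is exactly right: if $N=N(x)$ were allowed to grow so that $\sum_i p_i(x)$ need not vanish, Bonferroni's quadratic remainder is no longer $o(\bar F(x))$ and the union bound can exceed $1$, whereas the inequalities $\prod(1-p_i)\le e^{-\sum p_i}$ and $1-e^{-s}\ge(1-e^{-1})s$ remain usable; in the paper's fixed-$N$ setting, however, that machinery is unnecessary and the constant it introduces is slack. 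One small caution: your alternative ``multiplicative'' lower bound gives only $(1-e^{-1})\sum_i p_i$, which is weaker than the theorem's lower bound $\sum_i p_i+o(\bar F(x))$, so it cannot replace the Bonferroni step for proving the statement as written; you present it only as an aside, which is fine. You are also correct that condition \ref{ass2} plays no role here; neither proof uses it.
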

\begin{remark}
There is nothing very special about the upper bound constant $(1-e^{-1})^{-1}$. It only paves the way for upper-bounding $e^{-x}$ by an affine function.  More is explained in the appendix, where we explain under a bit more stringent conditions, the exact statement of the catastrophe principle would be obtained, namely $\BP\left[\max_{1 \leq i \leq N}X_i > x \right] \sim\sum_{i=1}^N \BP\left[X_i >x \right]$ in this case.
\end{remark}
An important take-away from this result is that even under the extended case (non-identical and $\BR$-valued random variables), the catastrophe principle asymptotically holds up to a constant. More precisely, the probability that the sum exceeds a large value is of the \textit{same order} of the maximum summand exceeding the same threshold. This can also be interpreted in another sense: aggregate fluctuations do not become extremely large by accumulating small variations; rather, there has to be a single factor with large deviation to support such an extreme event.
\subsection{Conditional Monte-Carlo Algorithm}
The asymptotic result in theorem \ref{tailequiv} regarding the tail probability of the sum can be used to take $(\sum_{i=1}^N c_i) \bar{F}(x)$ as an estimator for $\BP\left[X_1+\ldots+X_N > x \right]$. However, this estimation performs weakly in many cases, and simulation based on that will be inaccurate. A conditional Monte-Carlo algorithm is developed in \cite{asmussen2006improved} to cope with the tail probability of sum of iid heavy tails. That idea is incorporated here to obtain an estimator for the sum of independent but non-identical factors. The algorithm goes as follows:
\begin{enumerate}[label=(\roman*)]
\label{enum: cmc}
\item Sample $X_i$ from its corresponding distribution $F_i$ for $i = 1, \ldots, N$.
\item Let $M_N = \max\{X_i: i\in [N]\}$.
\item Compute $Z(x)= \sum_{i=1}^N \BP\left[S_N > x, M_N = X_i \rvert X_{-i} \right]$
\end{enumerate}

The proposed $Z(x)$ is an unbiased estimator of $\BP\left[S_N>x\right]$\footnote{The proof of this claim is simple and thus omitted.}. The notation $X_{-i}$ is used to denote all random variables excluding $X_i$, and $S_N$ represents the sum of generated random variables from independent distributions, i.e $X_1+\ldots+X_N$. It is shown in \cite{asmussen2006improved} that the estimator in step 3 of the algorithm \ref{enum: cmc} has \textit{bounded relative error} for non-negative iid case, when the common distribution $F$ has RV form.
\begin{remark}
\label{rm:algoComplexity}
Consult appendix \ref{app:complexity} for a detailed discussion on the computational complexity of algorithm \ref{enum: cmc}. 
\end{remark}
The following theorem establishes the same result of \cite{asmussen2006improved}, but for the extended case of not necessarily identical $\BR$-valued factors.
\begin{theorem}
\label{regtail}
 If $F$ has regularly varying tail, then estimator $Z(x)$ in algorithm \ref{enum: cmc} has bounded relative error, namely
\begin{equation}
\limsup_{x \to \infty} \frac{\Var(Z(x))}{\BE[Z(x)]^2} < \infty.
\end{equation}
\end{theorem}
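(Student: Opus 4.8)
The plan is to turn the relative-error bound into a \emph{deterministic} pointwise bound on $Z(x)$, after which regular variation does the rest. Since $Z(x)$ is an unbiased estimator of $\BP[S_N>x]$ (the claim recorded right after algorithm~\ref{enum: cmc}), we have $\BE[Z(x)]=\BP[S_N>x]$, and Theorem~\ref{tailequiv} together with condition~\ref{ass1} gives $\BE[Z(x)]\sim\big(\sum_{i=1}^N c_i\big)\bar F(x)$ with $\sum_i c_i>0$; hence $\BE[Z(x)]^2$ is of exact order $\bar F(x)^2$. Because $\Var(Z(x))\le\BE[Z(x)^2]$, it then suffices to show $\BE[Z(x)^2]=O\big(\bar F(x)^2\big)$, and in fact I will bound $Z(x)$ itself pointwise. (If $N=1$ then $Z(x)=\bar F_1(x)$ is deterministic and there is nothing to prove, so assume $N\ge2$; I also take the $F_i$ continuous, as is standard and as the stated unbiasedness implicitly requires --- in the atomic case one replaces $\bar F_i(\kappa x)$ below by $\BP[X_i\ge\kappa x]$, which has the same tail order.)

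The key step is an elementary observation about the conditional probabilities making up $Z(x)$. Fix $i$, condition on $X_{-i}$, and write $s_{-i}=\sum_{j\ne i}X_j$ and $m_{-i}=\max_{j\ne i}X_j$, both $\sigma(X_{-i})$-measurable. On the event $\{S_N>x,\ M_N=X_i\}$ one has simultaneously $X_i>x-s_{-i}$ and $X_i\ge m_{-i}$, hence $X_i\ge\max\!\big(x-s_{-i},\,m_{-i}\big)$, so that the $i$-th summand of $Z(x)$ is at most $\BP\big[X_i\ge\max(x-s_{-i},m_{-i})\ \big|\ X_{-i}\big]=\bar F_i\big(\max(x-s_{-i},m_{-i})\big)$. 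Now I claim that, \emph{for every realisation}, $\max(x-s_{-i},m_{-i})\ge\kappa x$ with the fixed constant $\kappa:=\tfrac{1}{2(N-1)}$: if $m_{-i}\ge\kappa x$ this is immediate, and otherwise $s_{-i}\le(N-1)m_{-i}<(N-1)\kappa x=x/2$, so $x-s_{-i}>x/2\ge\kappa x$. Consequently each summand of $Z(x)$ is at most $\bar F_i(\kappa x)$ almost surely, which gives the deterministic bound $0\le Z(x)\le\sum_{i=1}^N\bar F_i(\kappa x)$.

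It remains to combine this bound with regular variation. We obtain $\Var(Z(x))\le\BE[Z(x)^2]\le\big(\sum_{i=1}^N\bar F_i(\kappa x)\big)^2$. Since $F$ is regularly varying with some index $\alpha>0$, $\bar F(\kappa x)\sim\kappa^{-\alpha}\bar F(x)$, and condition~\ref{ass1} then yields $\sum_i\bar F_i(\kappa x)\sim\big(\sum_ic_i\big)\kappa^{-\alpha}\bar F(x)$; comparing with $\BE[Z(x)]\sim\big(\sum_ic_i\big)\bar F(x)$ gives
\[
\limsup_{x\to\infty}\frac{\Var(Z(x))}{\BE[Z(x)]^2}\ \le\ \kappa^{-2\alpha}\ =\ \big(2(N-1)\big)^{2\alpha}\ <\ \infty .
\]

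The step carrying the weight is the uniform bound $\max(x-s_{-i},m_{-i})\ge\kappa x$; everything else is bookkeeping, and the non-identical, $\BR$-valued nature of the factors is harmless because the $N$ summands of $Z(x)$ are each controlled in isolation and then added, with condition~\ref{ass1} converting $\bar F_i$ into $\bar F$. Conceptually, the one fact that does the real work is $\bar F(\kappa x)=O(\bar F(x))$ --- precisely the feature that regular variation supplies and that fails for lighter sub-exponential tails (where $Z(x)$ is still known to have bounded relative error, but only through a more delicate truncation argument). This is why the hypothesis is stated for RV distributions rather than for general sub-exponential ones, and I do not anticipate any further obstacle beyond keeping the constants and the atomic-case caveat straight.
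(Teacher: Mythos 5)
Your proof is correct and takes essentially the same route as the paper's: establish a deterministic pointwise upper bound on $Z(x)$ by showing the argument of each $\bar F_i$ is bounded below by a fixed multiple of $x$, then invoke the regular-variation index to compare $\bar F(\kappa x)$ with $\bar F(x)$. The only difference is your constant $\kappa=1/(2(N-1))$, which is slightly looser than the paper's $1/N$ (the paper observes directly that $M_{N,-i}\vee(x-S_{N,-i})\ge x/N$ always), so you land on the bound $(2(N-1))^{2\alpha}$ rather than $N^{2\alpha}$ --- both finite, so the theorem follows either way.
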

\begin{proof}[Proof of Theorem \ref{regtail}]  Denote $M_{N,-i} = \max \{X_{-i}\}$, $S_{N,-i} = \sum_{j\neq i}X_j$, and let $\widetilde{X_i}$ be an independent copy of $X_i$. Note that $Z(x)$ is implicitly a statistic generated from $X_1,\ldots,X_N$, thereby a random variable.
\begin{equation}
\begin{split}
Z(x) &= \sum_{i=1}^N \BP\left[S_N > x, M_N=X_i \rvert X_{-i} \right]= \sum_{i=1}^N \BP\left[\widetilde{X_i}> (x-S_{N,-i})\vee M_{N,-i} \rvert X_{-i} \right]\\
&= \sum_{i=1}^N \bar{F}_i\left((x-S_{N,-i})\vee M_{N,-i} \right)\sim \sum_{i=1}^N c_i\bar{F}\left((x-S_{N,-i})\vee M_{N,-i} \right)
\end{split}
\end{equation}
One can check that if $M_{N,-i} \leq x/N$ then $x-S_{N,-i} \geq x/N$, thereby $M_{N,-i} \vee \left(x-S_{N,-i}\right) \geq x/N$ always. Consequently, $Z(x)$ is asymptotically upper bounded by $\left(\sum_{i=1}^N c_i \right) \bar{F}(x/N)$, which yields to
\begin{equation}
\begin{split}
\limsup_{x\to \infty}\frac{\BE\left[Z(x)^2\right]}{\BE\left[Z(x)\right]^2} &\leq \lim_{x\to \infty} \frac{\left(\sum_{i=1}^N c_i \right)^2 \bar{F}(x/N)^2}{\left(\sum_{i=1}^N c_i \right)^2 \bar{F}(x)^2}= \frac{L^2(x/N)/(x/N)^{2\alpha}}{L(x)^2/x^{2\alpha}} = N^{2\alpha}.
\end{split}
\end{equation}
\end{proof}

\subsection{CMC Concentration and Efficiency Analysis}
The CMC algorithm can be repeated $n$ times with the outcome of $i$th step being referred as $Z_i(x)$, and the sample average is denoted by $\bar{Z}_n(x)$. Let $\mu(x) := \BP\left[X_1+\ldots+X_N>x \right]$, and $\sigma(x)^2 := \text{Var}(Z(x))$. Then, a simple application of central limit theorem yields to:
\begin{equation}
\frac{\bar{Z}_n(x)-\mu(x)}{\sigma(x)/\sqrt{n}} \stackrel{d}{\Longrightarrow} Z \stackrel{d}{=} \mathcal{N}(0,1)
\end{equation}
Therefore, one can get the following asymptotic confidence interval for the large deviation probability of $\bar{Z}_n(x)$:
\begin{equation}
\label{cltbound}
\begin{split}
\BP\left[\left|\bar{Z}_n(x)-\mu(x) \right| \leq \kappa \mu(x) \right] &= \BP\left[\left|Z\right| \leq \frac{\kappa\mu(x)}{\sigma(x)/\sqrt{n}} \right] + o_n(1)\\
& \geq \BP\left[ \left|Z \right| \leq \frac{\kappa\sqrt{n}}{ N^{\alpha}}\right] + o_x(1)+o_n(1)\\
& = \left(2\Phi\left(\frac{\kappa\sqrt{n}}{ N^{\alpha}}\right)-1\right) + o_x(1)+o_n(1),
\end{split}
\end{equation}
where $\Phi(\cdot)$ is the Gaussian CDF. Thus, for large enough $n$ and $x$ we have $\bar{Z}_n(x) \in \left(\mu(x)(1-\kappa),\mu(x)(1+\kappa) \right)$ with probability of at least $\left(2\Phi(\kappa \sqrt{n} N^{-\alpha})-1\right)$. Another way to find the concentration bound on $\bar{Z}_n(x)$ is to use the Markov's inequality:
\begin{equation}
\label{markovbound}
\BP\left[ \left| \bar{Z}_n(x) -\mu(x) \right| > \kappa \mu(x) \right] \leq \frac{\BE\left[\left( \bar{Z}_n(x) -\mu(x)\right)^2 \right]}{\kappa^2 \mu(x)^2} \leq \frac{N^{2\alpha}}{\kappa^2 n}+o_x(1),
\end{equation}
where the last inequality uses the final bound in theorem \ref{regtail} and holds for large enough $x$. Finally, we express a stronger approach to get a concentration bound based on the notion of sub-Gaussian random variables.
\begin{definition} [\cite{van1996weak}]
A random variable $X$ with mean $\mu = \BE X$ is called \textit{sub-Gaussian}, if there exists $\sigma >0$, such that
\begin{equation}
\BE\left[ e^{\lambda(X-\mu)}\right] \leq e^{\frac{\lambda^2 \sigma^2}{2}}, ~ \text{for all} ~ \lambda \in \BR.
\end{equation}
\end{definition}
\begin{remark}
\label{gaussdev}
Suppose that the random variable $X$ with mean $\mu$ is sub-Gaussian with parameter $\sigma$, then the following \textit{Chernoff} deviation bound would immediately fall out:
\begin{equation}
\BP\left[ \left|X-\mu \right| > t\right] \leq 2 e^{-t^2/2\sigma^2}
\end{equation}
\end{remark}
One can show that if $X$ takes value in $[a,b]$, then its sub-Gaussianity parameter is $(b-a)/2$. By looking at the computations in theorem \ref{regtail}, we can confirm that $Z(x) \in [0,\sum_{i=1}^N \bar{F}_i(x/N)]$, thereby $Z(x)$ is sub-Gaussian with parameter $\sum_{i=1}^N \bar{F}_i(x/N)/2$, and the following deviation bound results from remark \ref{gaussdev}:
\begin{equation}
\begin{split}
\BP\left[ \left|Z(x) - \mu(x) \right| > \kappa \mu(x)\right] &\leq 2\exp\left\{\frac{-2\kappa^2 \mu(x)^2}{\left( \sum_{i=1}^N \bar{F}_i(x/N)\right)^2}\right\}= e^{-2\kappa^2/ N^{2\alpha}}+o_x(1)
\end{split}
\end{equation}

In the last step we use the tail approximation for both $\mu(x)$ and the sum in the exponent's denominator. This is a one-shot bound, namely just for one trial of CMC algorithm, whereas if we repeat this process $n$ times, and take the sample average, then we get a much sharper precision:
\begin{equation}
\label{subbound}
\begin{split}
\BP\left[\left|\bar{Z}_n(x) - \mu(x) \right| > \kappa \mu(x) \right] & \leq 2\exp\left\{\frac{-2n\kappa^2 \mu(x)^2}{\left( \sum_{i=1}^N \bar{F}_i(x/N)\right)^2}\right\}= e^{-2n\kappa^2 /N^{2\alpha}}+o_x(1)
\end{split}
\end{equation}

As can be viewed in all three bounds \eqref{cltbound}, \eqref{markovbound} and \eqref{subbound} the ratio $N^{2\alpha}/n$ turns out to be the key parameter controlling the decay rate of error probability. For instance, if $N=10$, and $\alpha=2$, we need to repeat CMC algorithm $10^4$ times to get small error probability. An important observation here is that $n$ scales proportional to $N^{2\alpha}$, thus for fixed error rate smaller values of $\alpha$ lead to faster convergence rate, which makes more sense once we recall that the smaller levels of $\alpha$ correspond to the fatter tails. Therefore, the tail asymptotic equivalence relation will be achieved at smaller $x$'s, equivalently, the error in tail probability estimation would be smaller for fixed $x$.

The proposed CMC algorithm asymptotically outperforms the crude Monte-Carlo sampling in the sense of estimator's efficiency, namely for certain precision level $\kappa$, the deviation probability of CMC estimator is smaller than its regular sample mean counterpart, known as
\begin{equation}
\hat{\mu}_n(x):= \frac{1}{n} \sum_{k=1}^n 1_{\left[ X_1^{(k)}+\ldots + X_N^{(k)} > x\right]},
\end{equation}
where $X_i^{(k)}$ is the $k$th independent draw from $F_i$. The main theoretical result of the paper is presented next, in that we establish the exponential boost obtained via the proposed CMC estimator relative to the crude Monte-Carlo counterpart.

\begin{theorem}\label{cmceff}
For any precision level $0 < \kappa < 1$, the CMC estimator $\bar{Z}_n(x)$ is exponentially more efficient than $\hat{\mu}_n(x)$. Namely, for any $0<r<2\kappa^2 N^{-2\alpha}$ 
\begin{equation}
\label{relEff}
\limsup_{x \to \infty}\lim_{n \to \infty} \left\{ r n +\log\left( \frac{\BP\left[ \left|\bar{Z}_n(x) - \mu(x) \right| > \kappa \mu(x) \right]}{\BP\left[ \left|\hat{\mu}_n(x) - \mu(x) \right| > \kappa \mu(x) \right]}\right) \right\} \leq  0.
\end{equation}
\end{theorem}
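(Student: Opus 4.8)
The plan is to control the numerator and the denominator of the ratio in \eqref{relEff} separately by explicit functions of $n$ and $x$, send $n\to\infty$ for each fixed $x$, and then use the fact that, as $x\to\infty$, the CMC error probability keeps a strictly positive exponential decay rate while the decay rate of the crude Monte-Carlo error probability degenerates to zero.

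For the numerator I would reuse the deterministic bound $Z(x)\in[0,\sum_{i=1}^N\bar{F}_i(x/N)]$ already established inside the proof of Theorem~\ref{regtail}: then $\bar{Z}_n(x)$ is an average of $n$ i.i.d.\ bounded variables with mean $\mu(x)$, and the sub-Gaussian (Hoeffding--Chernoff) inequality yields, for every $n$ and every $x$,
\[
\BP\left[\left|\bar{Z}_n(x)-\mu(x)\right|>\kappa\mu(x)\right]\le 2\,e^{-n\,c(x)},\qquad c(x):=\frac{2\kappa^2\mu(x)^2}{\bigl(\sum_{i=1}^N\bar{F}_i(x/N)\bigr)^2}.
\]
Using assumption~\ref{ass1} to get $\sum_{i}\bar{F}_i(y)\sim(\sum_i c_i)\bar{F}(y)$, Theorem~\ref{tailequiv} for $\mu(x)\sim(\sum_i c_i)\bar{F}(x)$, and the regular-variation identity $\bar{F}(x)/\bar{F}(x/N)=[L(x)/L(x/N)]\,N^{-\alpha}\to N^{-\alpha}$, I get $c(x)\to 2\kappa^2 N^{-2\alpha}$ as $x\to\infty$.

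For the denominator I would use only a crude lower bound: since $0<\kappa<1$ and $\mu(x)>0$ (the RV hypothesis forces each $X_i$, hence $S_N$, to have unbounded support), the event that none of the $n$ draws exceeds $x$ already forces $|\hat{\mu}_n(x)-\mu(x)|=\mu(x)>\kappa\mu(x)$, so
\[
\BP\left[\left|\hat{\mu}_n(x)-\mu(x)\right|>\kappa\mu(x)\right]\ge(1-\mu(x))^n,
\]
and the rate $d(x):=-\log(1-\mu(x))\to0$ because $\mu(x)\sim(\sum_i c_i)\bar{F}(x)\to0$. Combining the two displays, the ratio $R_n(x)$ in \eqref{relEff} obeys $\log R_n(x)\le\log 2+n\,(d(x)-c(x))$, hence $rn+\log R_n(x)\le\log 2+n\,(r-c(x)+d(x))$. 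Given $0<r<2\kappa^2 N^{-2\alpha}$, the limits above give $c(x)-d(x)\to2\kappa^2 N^{-2\alpha}>r$, so there is $x_0$ with $r-c(x)+d(x)<0$ for all $x\ge x_0$; for such $x$ the inner limit over $n$ equals $-\infty$, and therefore $\limsup_{x\to\infty}\lim_{n\to\infty}\{rn+\log R_n(x)\}=-\infty\le0$, as required.

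The hard part is not really the algebra but making the comparison honest: I have to be sure the crude estimator is genuinely far from logarithmically efficient, i.e.\ that its large-deviation rate $d(x)$ really collapses to $0$ (the lower bound $(1-\mu(x))^n$ is what pins this down, since any sharper lower bound only helps), so that the fixed positive CMC rate $2\kappa^2N^{-2\alpha}$ beats it for all large $x$. The iterated-limit order in the statement ($n$ first, $x$ second) is exactly what makes this collapse usable, since it lets me restrict attention to $x\ge x_0$ and discard the regime of moderate $x$ where $\mu(x)$ is not yet small.
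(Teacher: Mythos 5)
Your proof is correct, and it departs from the paper's argument in a meaningful (and simplifying) way in how it controls the denominator. The paper's proof invokes Lemma~\ref{kl}, a Kullback--Leibler-based lower bound on the binomial tail, then splits the deviation event into its two half-lines, applies the lemma to each side, combines the two by convexity of $x\mapsto e^{-nx}$, and finally upper-bounds the resulting $D_1+D_2$ by a delicate expansion of the KL divergences using $\log(1+y)\le y$. You sidestep all of this by noting that, since $0<\kappa<1$, the event $\{\hat{\mu}_n(x)=0\}$ already implies $|\hat{\mu}_n(x)-\mu(x)|=\mu(x)>\kappa\mu(x)$, giving the clean lower bound $\BP[\,|\hat{\mu}_n(x)-\mu(x)|>\kappa\mu(x)\,]\ge(1-\mu(x))^n$ with exponential rate $d(x)=-\log(1-\mu(x))\to 0$. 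Your bound is coarser than the paper's (whose per-sample rate is roughly $\kappa^2\mu(x)/(1-\mu(x))$, smaller than your $\mu(x)+O(\mu(x)^2)$, hence a larger lower bound), but coarseness is harmless here: what drives the theorem is only that the crude rate collapses to zero while the CMC rate $c(x)$ stays bounded away from zero at $2\kappa^2 N^{-2\alpha}>r$, which your $d(x)$ delivers just as well. The numerator treatment is identical to the paper's bound \eqref{subbound}, and your reading of the iterated limit (inner $n\to\infty$ first, then $x\to\infty$, discarding moderate $x$) is exactly how the statement is meant. Net effect: you trade the information-theoretic machinery for an elementary inclusion argument, obtaining the same conclusion with less work and arguably more transparency about why the iterated-limit ordering matters.
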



\subsection{Importance Sampling Algorithm}
The goal in this part is to develop an alternative to CMC based on importance sampling. Inspired by the argument in previous part, we exploit the \textit{partitioning} method based on $M_N$. Suppose $f_i$ is the density of $X_i$, and $\tilde{f}_i$ is the alternative density, which is the candidate for importance sampling. Let $\mathrm{d} \BP_{(i)} = \mathrm{d} F_{-i} \otimes \mathrm{d} \widetilde{F}_i$ be the product measure generated from all original distributions bare $F_i$, where $\tilde{F}_i$ is used instead, and let $\BE_{(i)}$ express the expectation with respect to $\BP_{(i)}$. After all, the importance sampling steps follow as:
\begin{enumerate}
\item Generate $X_i \sim F_i$, and $\widetilde{X_i} \sim \widetilde{F}_i$.
\item Let $S_N^{(i)}:= S_{N,-i}+\widetilde{X}_i$, and $M_N^{(i)}=\max\{X_{-i},\widetilde{X}_i\}$.
\item Take $\sum_{i=1}^N \frac{f_i(\widetilde{X}_i)}{\tilde{f}_i(\widetilde{X}_i)} 1_{[S_N^{(i)}>x, M_N^{(i)} = \tilde{X}_i]}$ as an estimator for $\BP\left[S_N>x \right]$.
\end{enumerate}
To show the unbiasedness of the estimator, let us for example take the expectation of the $i$th summand with respect to $\BE_{(i)}$:
\begin{equation}
\begin{split}
\BE_{(i)}\left[\frac{f_i(\widetilde{X}_i)}{\tilde{f}_i(\widetilde{X}_i)}1_{[S_N^{(i)}>x, M_N^{(i)} = \tilde{X}_i]} \right] &= \int \frac{f_i(\tilde{x}_i)}{\tilde{f}_i(\tilde{x}_i)}1_{[S_N^{(i)}>x, M_N^{(i)} = \tilde{x}_i]} \left(\prod_{j\neq i}f_j(x_j)dx_j\right) \tilde{f}_i(\tilde{x}_i)d\tilde{x}_i \\
&= \int 1_{[S_N>x, M_N = x_i]}\prod_{j}f_j(x_j)dx_j\\
&=\BP\left[ S_N> x, M_N=X_i\right]
\end{split}
\end{equation}

Hence, $\sum_{i=1}^N\BP\left[ S_N> x, M_N=X_i\right] = \BP\left[S_N>x \right]$ and the unbiasedness is resulted.
Although the introduced estimator is unbiased, but as a downside it is shown in \cite{asmussen2006improved} that even for iid non-negative factors, it falls behind the CMC estimator let alone for our purpose. Moreover, one needs to find appropriate candidates for sampling distributions ($\tilde{f}_i$'s), where there is no general recipe to follow besides heuristics. The related literature is yet to find appropriate candidates for the sampling distributions in importance sampling, so likewise the question remains open in our case.


\section{Market Portfolio Large Deviation Probability}  \label{market}
One of the main motivations of studying RV distributions in this paper was to capture the large deviations of asset returns, as initially laid out for the Gaussian case. Now, consider the scenario in which the factor returns have Power law tails, i.e $\BP\left[\phi_i > x \right] \propto x^{-\tau_i}$, that happens to be the case in many empirical stock return observations, see for example \cite{cont2001empirical} and \cite{gopikrishnan1998inverse}. Then, the demeaned market index return can be modeled as the sum of independent zero mean factors combined with an independent noise, as seen before in \eqref{eqmarket}:
\begin{equation}
\xi = \sum_{i=1}^k \bar{\beta_i}\phi_i +\bar{\ve}.
\end{equation}
Since $\phi_i$ is assumed to have Power law tail, so does $\bar{\beta}_i\phi_i$ with the same tail coefficient. Therefore, letting $\tau = \min\{\tau_i: i \in [k]\}$ and $\gamma \in \{i: \tau_i=\tau\}$, the supporting distribution $F$ in the sense of theorem \ref{tailequiv} would be a Power law with coefficient $\tau$ (more precisely $F \stackrel{d}{=} \beta_\gamma \phi_\gamma$), and
\begin{equation}
c_i=\left\{ \begin{array}{lr}
\lim_{x\to \infty}\frac{\BP\left[\bar{\beta}_i \phi_i >x\right]}{\BP\left[\bar{\beta}_\alpha \phi_\alpha >x\right]} & \tau_i = \tau\\
0 & \tau_i > \tau
\end{array} \right.
\end{equation}

Moreover, hypothetically one can impose Gaussian structure on the idiosyncratic risk terms, and treat them as independent factors that have vanishing tail probabilities relative to the heaviest tail component, $\beta_\gamma \phi_\gamma$, namely \begin{equation}
\lim_{x\to \infty} \BP\left[\bar{\ve}>x \right]/\BP\left[\bar{\beta}_\gamma \phi_\gamma>x \right]=0.
\end{equation}

Then, the result of theorem \ref{tailequiv} implies that, for large $\lambda$:
\begin{equation}
\BP\left[\xi >\lambda \right] \sim \left(\sum_{i=1}^k c_i\right) \BP\left[ \bar{\beta}_\gamma \phi_\gamma>\lambda\right] \propto x^{-\tau_\gamma}
\end{equation}

As a result of this asymptotic tail equivalence, we can contemplate that only the factors with the heaviest tails contribute to the extreme events, and the market large fluctuations are mainly driven by them. Particularly, in terms of hedging against extreme events, the risk managers shall not worry about the factors with fat body distribution but light tails, even if they add a sizable portion of the portfolio variance, rather they should mainly concern about highly skewed ones.

Next, let us investigate the case, where the market portfolio is generated by aggregating a large number of individual stocks, uniformly weighted without loss of generality in this context. It is often observed that after factor extraction the remaining idiosyncratic parts reflect fat-tailed dispersions and treating them as Gaussians is quite unrealistic. Therefore, their deviation could possibly affect the aggregate index fluctuations. However, we show this is not true in the sense that each one can individually affect the fluctuations of its corresponding security, but once added together and averaged out, the aggregate noise deviation probability would have negligible effect compared to the contribution of factors with heavier tails. More precisely, as described above let $\eta_i = \langle \beta_i,\phi \rangle + \ve_i$ be the return to the $i$th security, while $\ve_i$ is no longer required to be Gaussian, but can take any RV form. The following proposition asserts this claim in a more definitive form.
\begin{proposition}
\label{marketthm}
Let $\eta_i = \langle \beta_i,\phi \rangle + \ve_i$ be the return to the $i$th security, such that idiosyncratic residuals likewise the factor returns are independent and have RV tails. Then, given the existence of a supporting distribution $F \sim L(x)/x^\alpha$ as in \ref{hcond} with $\alpha>1$, and uniformly bounded proportionality coefficients $\{c_i\}$ of individual noise distributions with respect to $F$ ($\max_{i \in [M]} c_i < c$), we get
\begin{equation}
\lim _{M \to \infty} \BP\left[ \frac{1}{M}\sum_{i=1}^M \ve_i > x\right] = 0,
\end{equation}
for fixed large $x$.
\end{proposition}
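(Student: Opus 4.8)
The plan is to read the stated limit as a one‑sided weak law of large numbers for the independent, non‑identically distributed residuals and to prove it by truncation, since regular variation buys us only uniform control of the \emph{tails} of the $\ve_i$, not finite second moments. From the (two‑sided) RV hypothesis on the residuals with uniformly bounded coefficients, $\BP[\ve_i>y]\sim c_i\bar F(y)$ and $\max_i c_i<c$, together with $\bar F(y)=L(y)/y^{\alpha}$ and $\alpha>1$, there are $y_0$ and a constant $C=C(c)$ with
\[
\BP\left[|\ve_i|>y\right]\le C\,\bar F(y)=C\,L(y)\,y^{-\alpha}\qquad\text{for all }i\text{ and all }y\ge y_0 .
\]
Because $\alpha>1$ this makes $\{\ve_i\}$ uniformly integrable, so $\BE[\ve_i]$ exists; as the $\ve_i$ are regression residuals we take $\BE[\ve_i]=0$ (it is in fact enough that $\limsup_{M}\frac1M\sum_{i=1}^M\BE[\ve_i]<x$, which is what ``fixed large $x$'' permits). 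One is tempted to apply Theorem \ref{tailequiv} to $\BP[\sum_{i=1}^M\ve_i>Mx]$ and read off the order $(\sum_{i=1}^M c_i)\bar F(Mx)\le c\,x^{-\alpha}M^{1-\alpha}L(Mx)\to 0$: that is the correct intuition, but only heuristic, since Theorem \ref{tailequiv} is stated for a \emph{fixed} number of summands, whereas here the number of summands $N=M$ and the threshold $Mx$ diverge together. The truncation below makes it rigorous.

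I would truncate at level $M$: put $Y_i:=(\ve_i\wedge M)\vee(-M)$. A union bound and the uniform tail estimate give
\[
\BP\Big[\max_{1\le i\le M}|\ve_i|>M\Big]\le\sum_{i=1}^M\BP\left[|\ve_i|>M\right]\le C\,M\,L(M)\,M^{-\alpha}=C\,L(M)\,M^{1-\alpha}\longrightarrow 0,
\]
using $\alpha>1$. On the complement of that event $\frac1M\sum_{i=1}^M\ve_i=\frac1M\sum_{i=1}^M Y_i$, so it suffices to bound $\BP[\frac1M\sum_{i=1}^M Y_i>x]$. Uniform integrability yields $\delta_M:=\sup_i|\BE[Y_i]|\le\sup_i\BE\big[|\ve_i|\mathbf{1}\{|\ve_i|>M\}\big]\to 0$, so for $M$ large $\delta_M<x/2$, and Chebyshev's inequality together with independence gives
\[
\BP\Big[\tfrac1M\sum_{i=1}^M Y_i>x\Big]\le\BP\Big[\tfrac1M\sum_{i=1}^M\big(Y_i-\BE Y_i\big)>\tfrac{x}{2}\Big]\le\frac{4}{x^2M^2}\sum_{i=1}^M\Var(Y_i).
\]

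It then remains to see that the last expression vanishes. Since $|Y_i|\le|\ve_i|\wedge M$, $\Var(Y_i)\le\BE[Y_i^2]\le 2\int_0^{M}y\,\BP[|\ve_i|>y]\,dy$; splitting at $y_0$ and inserting the uniform tail bound leaves the integral $\int_{y_0}^{M}L(y)\,y^{1-\alpha}\,dy$, which by Karamata's theorem is $M^{2-\alpha+o(1)}$ when $1<\alpha<2$, slowly varying (hence $M^{o(1)}$) when $\alpha=2$, and $O(1)$ when $\alpha>2$; thus $\sup_i\Var(Y_i)=M^{(2-\alpha)_++o(1)}$. Consequently
\[
\frac{1}{M^2}\sum_{i=1}^M\Var(Y_i)=M^{\,(2-\alpha)_+-1+o(1)}\longrightarrow 0,
\]
because the exponent $(2-\alpha)_+-1$ equals $1-\alpha<0$ for $1<\alpha\le2$ and equals $-1$ for $\alpha>2$. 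Combining this with the union bound above gives $\lim_{M\to\infty}\BP[\frac1M\sum_{i=1}^M\ve_i>x]=0$, as claimed.

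The step I expect to be the genuine obstacle is this last one. A direct Chebyshev bound on the \emph{untruncated} average is unavailable, because $\BE[\ve_i^2]=\infty$ whenever $1<\alpha<2$; one must truncate and then balance the truncated second moment — of order $M^{2-\alpha}$ in that heavy range — against the $M^{-2}$ produced by averaging $M$ independent terms, and it is precisely here that $\alpha>1$ is used decisively (it is also what gives the uniform integrability and the union‑bound decay $M\bar F(M)\to0$ upstream). The remaining work — absorbing the slowly varying factor $L$ and dispatching the borderline case $\alpha=2$ — is routine Karamata bookkeeping.
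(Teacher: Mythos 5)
Your proof takes a genuinely different and substantially more careful route than the paper's. The paper simply invokes Theorem~\ref{tailequiv} to write $\BP[\sum_{i=1}^M \ve_i > Mx] \sim \big(\sum_{i=1}^M c_i\big)\,L(Mx)/(Mx)^\alpha$ and then sends $M\to\infty$; you correctly observe that this is heuristic, because Theorem~\ref{tailequiv} is a limit in the threshold for a \emph{fixed} number of summands, whereas here the number of summands $N=M$ and the threshold $Mx$ diverge together, and the theorem gives no uniformity in $N$. Your truncation-at-level-$M$ argument (union bound for $\max_i|\ve_i|>M$, Chebyshev on the truncated sum, Karamata to control $\Var(Y_i)$) repairs this and is a bona fide one-sided weak LLN for independent non-identically distributed summands, whereas the paper's argument is essentially a plausibility calculation. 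You also make explicit two assumptions that the paper leaves tacit: that the $\ve_i$ are (approximately) centered (otherwise the claim is simply false — if $\BE[\ve_i]\equiv 2x$ the average concentrates above $x$), and that the tail bound $\BP[|\ve_i|>y]\le C\bar F(y)$ holds uniformly over $i$ for $y\ge y_0$. Note, though, that this last uniformity does not actually follow from the stated hypothesis $\max_i c_i<c$, which only controls the pointwise \emph{limits} $c_i=\lim_y \BP[\ve_i>y]/\bar F(y)$; the crossover point beyond which $\BP[\ve_i>y]\le 2c_i\bar F(y)$ could a priori drift to infinity with $i$. You inherit this implicit strengthening from the paper (whose proof needs the same uniformity to justify summing $M$ asymptotics as $M\to\infty$), so it is a shared gap rather than a new one, but it would be cleaner to flag it as an additional regularity assumption rather than asserting it as a consequence of $\max_i c_i<c$. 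With those caveats, your proof is correct and, in the regime where $M$ and the threshold grow together, is the rigorous version of the argument the paper only sketches.
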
 

The important result of this proposition is that under some regularity conditions on the residual security risks, the aggregate effect of these factors to the frequency of market index fluctuations will vanish for large portfolios of assets. Therefore, the large deviation of portfolios of many assets is mainly controlled by the common factors, which appear in all individual asset returns. One can think of this result as a version of the central limit theorem type argument across independent residuals, but in the case of independent and non-identical variables with fat tails. The market index large deviation probability can then be approximated as:
\begin{equation}
\begin{split}
\BP\left[\langle \bar{\beta}, \phi \rangle + \frac{1}{M} \sum_{i=1}^M \ve_i  >  x\right] &\sim \BP\left[\langle \bar{\beta}, \phi \rangle  > x \right] + \BP\left[ \frac{1}{M} \sum_{i=1}^M \ve_i  >  x \right]\\
&\stackrel{(*)}{\sim} \BP\left[\langle \beta, \phi \rangle  > x \right]
\end{split}
\end{equation}

The first asymptotic equivalence simply follows from theorem \ref{tailequiv} as $x$ gets large, and the second equivalence $(*)$ falls out by sending $M \to \infty$, in addition to the assumption that the average factor loading vector converges as $M \to \infty$, namely,
\begin{equation}
M^{-1}\sum_{i=1}^M \beta_i \to \beta.
\end{equation}

The methods such as CMC and importance sampling that introduced in previous section can now be employed to find estimators for extreme deviation probability of market index return.
\section{Simulations}
\label{sec: simulations}
Equation \ref{relEff} is perhaps the most consequential result of this read. In present section this result is unpacked and validated through several simulations. In what follows we demonstrate that our proposed estimator is exponentially more efficient than the crude Monte-Carlo estimator. More formally, we demonstrate that for any precision level $0 < \kappa < 1$ and finite number $N$ of independent Pareto factors, $$\log(\Lambda) = \log\left(\frac{\BP\left[ \left|\bar{Z}_n(x) - \mu(x) \right| > \kappa \mu(x) \right]}{ \BP\left[ \left|\hat{\mu}_n(x) - \mu(x) \right| > \kappa \mu(x) \right]}\right)$$ shrinks with a rate of at least $r$ as a function of the sample size $n$ as $x \rightarrow \infty$, where  $0<r<2\kappa^2 N^{-2\alpha}$ and $\alpha = \min_{1 \leq i \leq N} \alpha_i$ is the shape parameter corresponding to the factor with the heaviest tail. The minimum rate $r$ is estimated using a linear mixed-effects model, details of which is explained in Appendix \ref{subsection:mixed}. Also for brevity from now on we refer to $\log(\Lambda)$ as the LR ratio. 

\begin{remark}
\label{remark:r}
The CMC estimator, see algorithm \ref{enum: cmc}, is exponentially more efficient relative to the crude Monte-Carlo estimator with a rate of at least $r$ if there is an $r > 0$ for which the convex hull of $\left\{(i, \log(\Lambda_i)): i = 1, \ldots, n\right\}$ is bounded above by $f(i) = -ri$. 
\end{remark}

In what follows equation \ref{relEff} is validated through simulations while estimation sensitivity with respect to $\alpha$ and $x$ is studied.

\subsection{Variable Deviation Bound}
\label{subsection:varC}

Here we examine the relationship between the LR and the sample size as we move deviation bound further from the mean. Figure \ref{fig:varC} illustrates $\log(\Lambda)$ vs. the sample size $n$ as the deviation bound $x$ increases. It is clear from this result that our estimator maintains exponential efficiency through a wide range of deviation bounds and the rate of efficiency increases with that bound. Deviation bounds, along with their corresponding LDP and rate $r$ are presented in Table \ref{table:varC}.

\begin{figure}
    \centering
    \includegraphics[width = \textwidth]{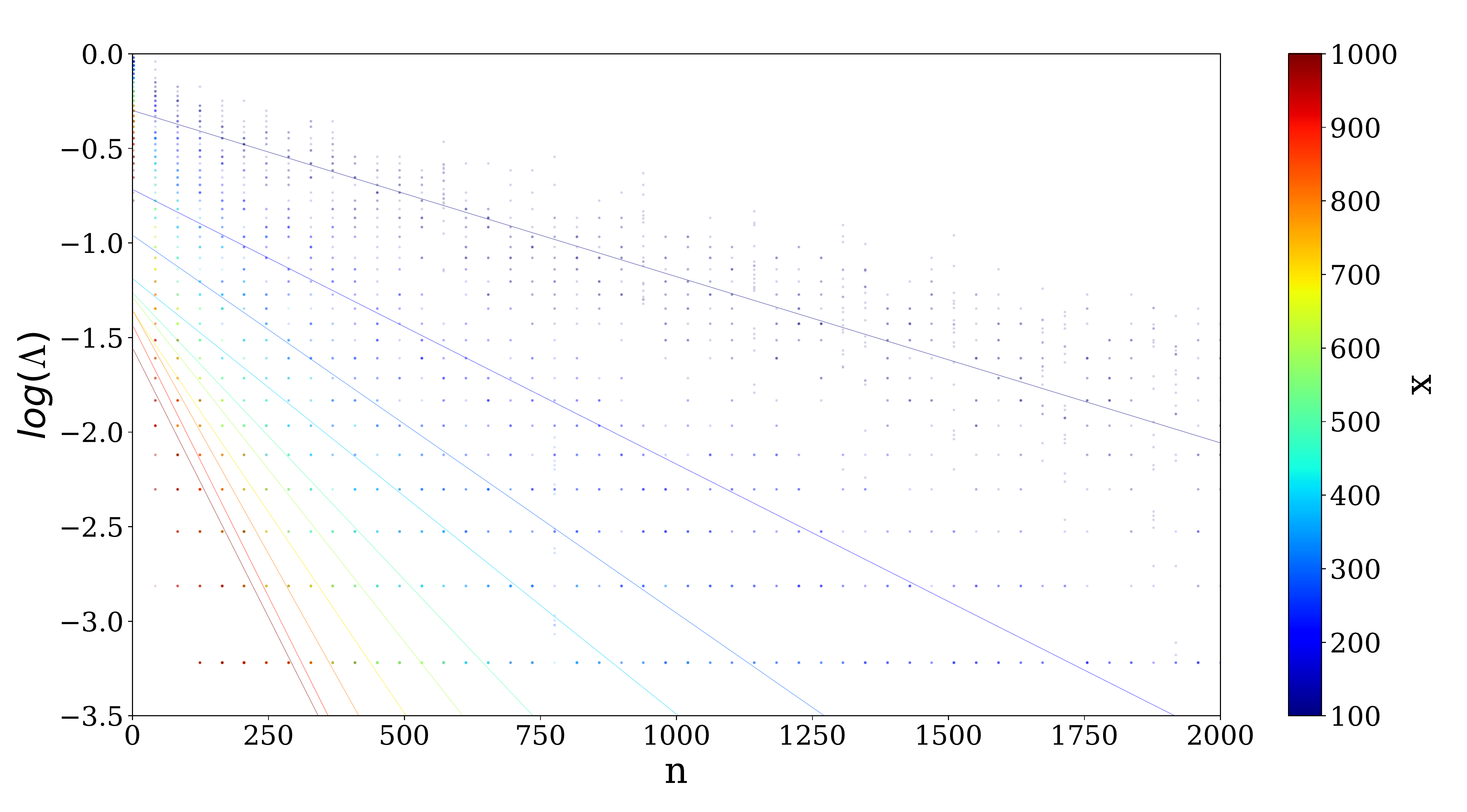}
    \caption{LR vs. sample size for increasing values of the deviation bound $x$. $r$ increases with $x$ which means that our CMC estimator performs exponentially better with increasing exponent as $\mu$ decreases, equiv. as $x$ increases.}
    \label{fig:varC}
\end{figure}

\begin{table}[h!]
\centering
\begin{tabular}{|c c c|} 
 \hline
 $x$ & $\mu$ & $r$\\ [0.5ex] 
 \hline\hline
 100& 1.921e-02& 8.7e-04\\
       200& 8.22e-03& 1.45e-03\\
       300& 5.14e-03& 1.99e-03\\
       400& 3.71e-03& 2.30e-03\\
       500& 2.89e-03& 3.03e-03\\
       600& 2.36e-03& 3.64e-03\\
       700& 1.99e-03& 4.25e-03\\
       800& 1.72e-03& 5.15e-03\\
       900& 1.51e-03& 5.74e-03\\
       1000& 1.35e-03& 5.69e-03\\
 \hline
\end{tabular}
\caption{$x, \mu$, and $r$ corresponding to simulation \ref{subsection:varC}}
\label{table:varC}
\end{table}

\subsection{Examining The Catastrophe Principle}
\label{subsec:catPr}
In this subsection two simulations are performed which aim to validate the Catastrophe principle. To this end, we simulate $M$ different factor models where each model contains $N$ Pareto factors with shape parameters
$\alpha_{i1}, \ldots, \alpha_{iN}$, $i = 1, \ldots, M$. We consider two cases: (1) groups that share the same $\alpha_{min}$, but the average tail thickness $\bar{\alpha}_i$ is different between models, (2) $\alpha_{min}$ is different between groups but each group shares the same $\bar{\alpha}$ with a group in the first case.

Figure \ref{fig:minConst} manifests the sensitivity of LR and $\mu$ to $\alpha$. Evidently, our CMC estimator maintains the exponential efficiency whose rate increases with mean tail thickness, denoted as $\bar{\alpha}$. 

\begin{figure}
    \centering
    \includegraphics[width = \textwidth]{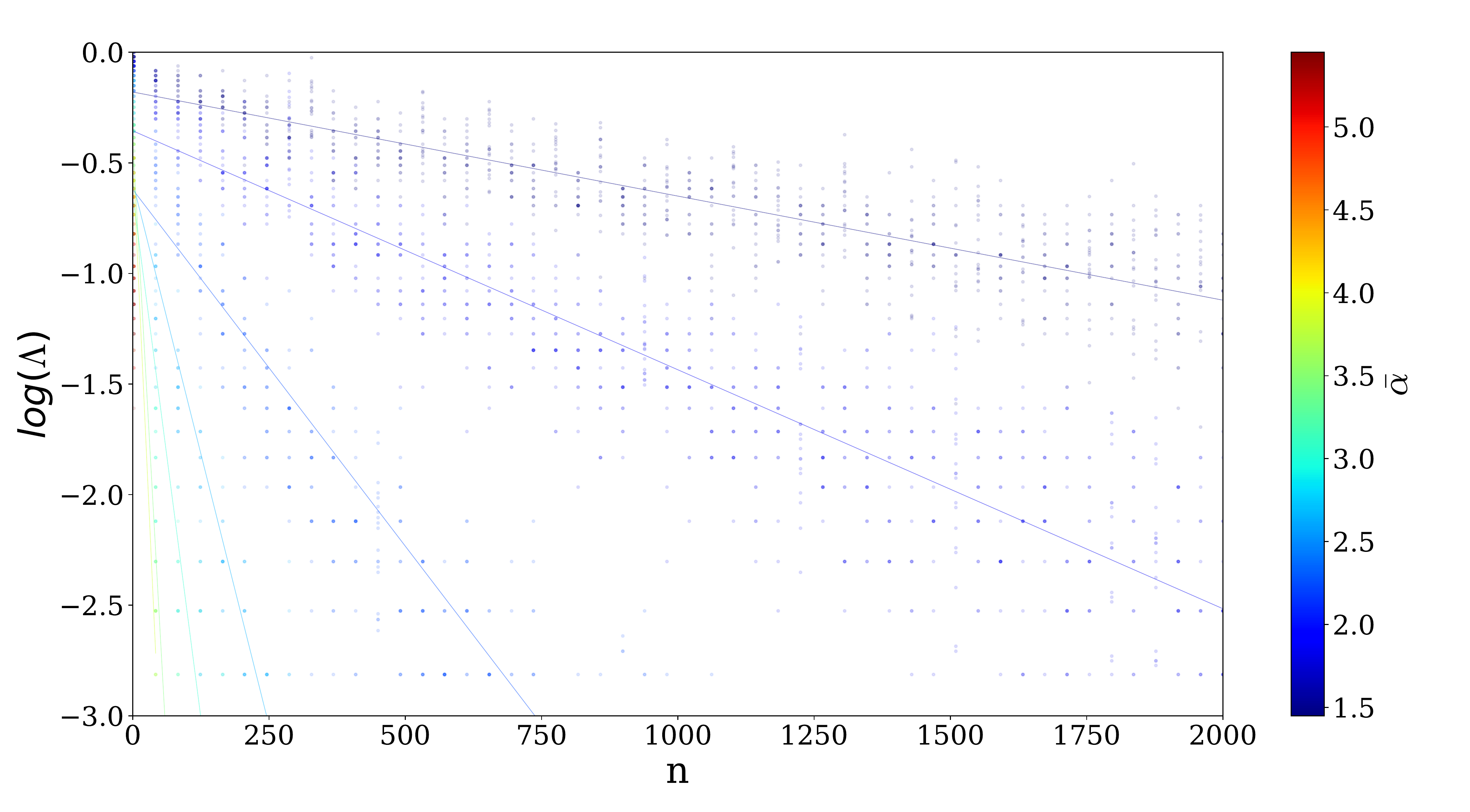}
    \caption{LR vs. sample size for various mean tail thicknesses while maximum tail thickness is constant. $r$ increases orders of magnitude with $\bar{\alpha}$.}
    \label{fig:minConst}
\end{figure}

\begin{table}[h!]
\centering
\begin{tabular}{|c c c|} 
 \hline
 $\bar{\alpha}$ & $\mu$ & $r$\\ [0.5ex] 
 \hline\hline
    1.45 & 3.367e-02& 4.7e-04\\
    1.85 & 1.389e-02& 1.08e-03\\
    2.25& 1.109e-02& 3.22e-03\\
    2.65& 1.058e-02& 9.82e-03\\
         3.05& 1.042e-02& 1.894e-02\\
         3.45& 1.034e-02& 4.432e-02\\
         3.85& 1.029e-02& 5.362e-02\\
         4.25& 1.025e-02& 2.6906e-01\\
         4.65& 1.022e-02& 3.0186e-01\\
         5.05& 1.020e-02& 4.1787e-01\\
         5.45& 1.018e-02& 4.4620e-01\\
 \hline
\end{tabular}
\caption{$\bar{\alpha}, \mu$, and $r$ while $\alpha_{min}$ is constant\textbf{}. LR increases by orders of magnitude while $\mu$ does not change significantly. This is an empirical validation of the Catastrophe principle, since it demonstrates that the LDP is driven by the heaviest tail and few smaller perturbations do not add up to a significant change in LDP.}
\label{table:minConst}
\end{table}

Table \ref{table:minConst} helps characterizing this increase more clearly. According to this table, $\mu$ is not sensitive to $\bar{\alpha}$ if all other factors have tails which are significantly thinner than $\alpha_{min}$. However, $r$ increases orders of magnitude which points to the high variability inherent in Monte-Carlo method.

In order to construct better characterization of our CMC method's sensitivity to maximum tail thickness, the previous simulation is repeated but this time $\alpha_{min}$ is not constant anymore but shape parameters are chosen in a way that for each model in the previous simulation, there exists a model in this simulation with equal $\bar{\alpha}$. In essence, mean tail thicknesses are similar in the two simulations. As before, CMC method maintains its dominance as $\alpha_{min}$ increases; however, upon consulting Table \ref{table:minVar}, we observe that, contrary to the previous simulation,  while $\mu$ decreases by orders of magnitude between factor models, $r$ does not change drastically.

\begin{figure}
    \centering
    \includegraphics[width = \textwidth]{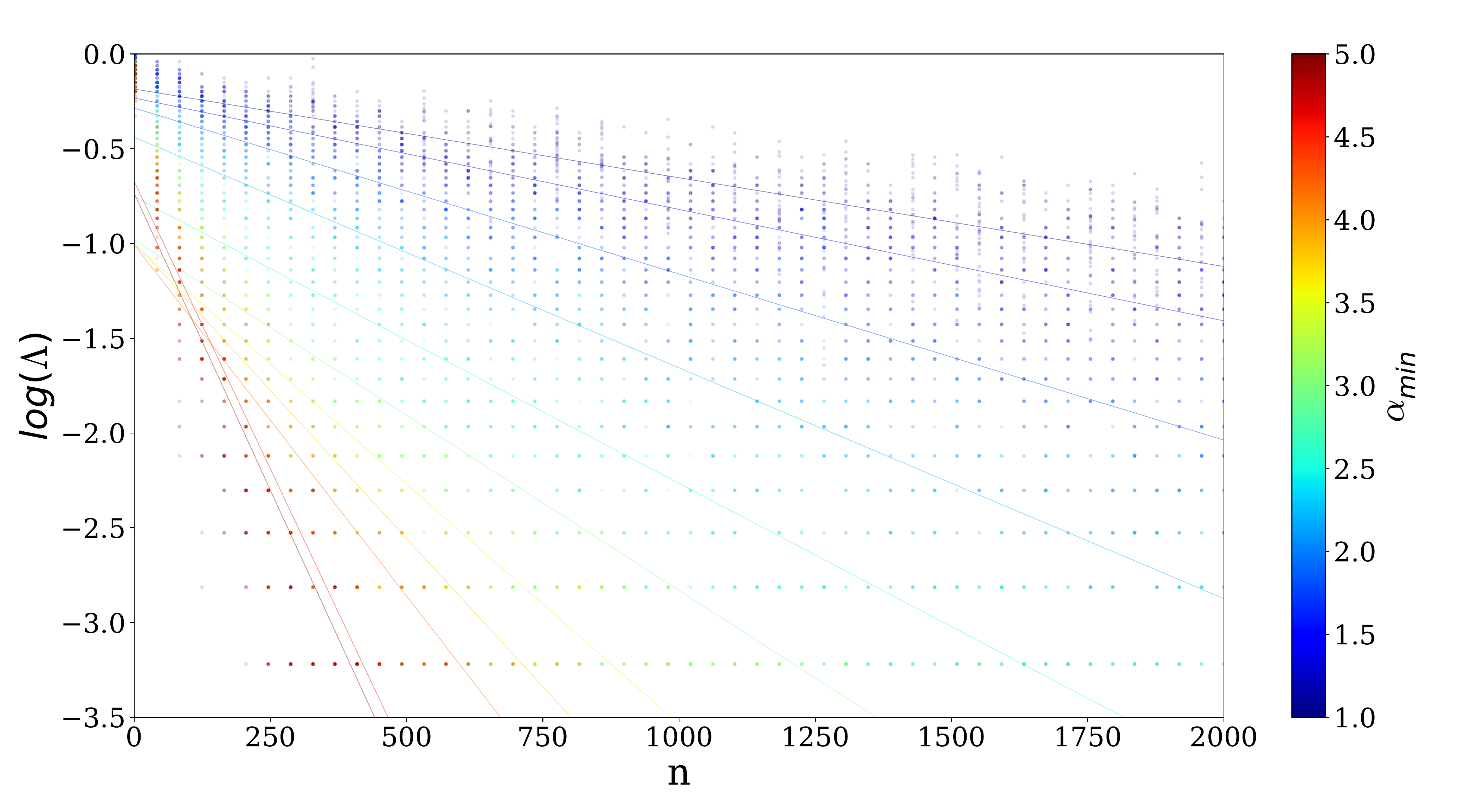}
    \caption{The same simulation as in Figure \ref{fig:minConst} is repeated. However, $\alpha_{min}$ is no longer constant.}
    \label{fig:minVar}
\end{figure}

\begin{table}[h!]
\centering
\begin{tabular}{|c c c|} 
 \hline
 $\alpha_{min}$ & $\mu$ & $r$\\ [0.5ex] 
 \hline\hline
  1.0 & 3.36e-02& 4.6e-04\\
        1.4 & 5.14e-03& 5.87e-04\\
        1.8 & 7.90e-04& 8.75e-04\\
        2.2 & 1.22e-04& 1.21e-03\\
        2.6 & 1.92e-05& 1.51e-03\\
        3.0 & 3.02e-06& 1.84e-03\\
        3.4 & 4.77e-07& 2.53e-03\\
        3.8 & 7.54e-08& 3.12e-03\\
        4.2 & 1.19e-08& 3.70e-03\\
        4.6 & 1.88e-09& 6.06e-03\\
        5.0 & 2.98e-10& 6.26e-03\\[0.5ex] 

 \hline
\end{tabular}
\caption{$\alpha_{min}, \mu$, and $r$, with $\bar{\alpha}$ kept equal between each row of this table and Table \ref{table:minConst}. Note the extreme change in $\mu$ while $r$ does not change significantly.}
\label{table:minVar}
\end{table}

\section{Conclusion}
\label{sec:conclusion}

This read covers a comprehensive asymptotic characterization of \textit{LDP}s in the case of linear factor models with \textit{ind Regularly Varying} factors. Exploiting this characterization, a Conditional Monte-Carlo estimator is proposed which has the same empirical time complexity, see Appendix \ref{app:complexity}, but is proven to be exponentially more efficient relative to the crude Monte-Carlo, see theorem \ref{relEff}. This claim was validated through extensive simulations while empirically characterizing the large deviation probabilities of the aforementioned factor models. Thus providing empirical support for the theoretical results presented in section \ref{rvfactors}. We hope to generalize the results of this article, especially the asymptotic behavior of linear factor models, to larger class of factor models, i.e. $\phi(\textbf{X})$ $ \mathbf{X} \in \mathbb{R}^N$ where $\phi \in \Phi$ a larger class of functions. Another important future direction is to study the LDP of factor models where $X_i$ are not necessarily independently distributed which can be used to estimate the LDP of portfolios with dependent assets.

\appendix

\section{Proofs}\label{app}

\subsection{Proof of Theorem \ref{tailequiv}}

First, the following lemma is proven, then the theorem's proof follows.
\begin{lemma}
\label{negl}
Let $F$ have regularly varying tail, namely $\bar{F}(x) \sim L(x) x^{-\alpha}$ for some $\alpha>0$. Then, there exists $0 < \delta < 1$, such that for $h(x)=x^\delta$,
\begin{equation}
\bar{F}(h(x))^2 = o(\bar{F}(x)).
\end{equation}

\end{lemma}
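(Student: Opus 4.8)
The plan is to compute $\bar F(h(x))^2/\bar F(x)$ directly using the regular-variation representation and to choose $\delta$ small enough that the polynomial factor wins. Writing $\bar F(x) \sim L(x) x^{-\alpha}$ with $L$ slowly varying, we have, for $h(x) = x^\delta$,
\[
\frac{\bar F(x^\delta)^2}{\bar F(x)} \sim \frac{L(x^\delta)^2\, x^{-2\alpha\delta}}{L(x)\, x^{-\alpha}} = \frac{L(x^\delta)^2}{L(x)}\, x^{\alpha(1-2\delta)}.
\]
Hence it suffices to pick $\delta$ with $1 - 2\delta < 0$, i.e. any $\delta \in (1/2, 1)$, and then argue that the slowly varying prefactor $L(x^\delta)^2/L(x)$ grows subpolynomially, so that the negative power $x^{\alpha(1-2\delta)}$ forces the whole expression to $0$.

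The one genuine point to nail down is the control of $L(x^\delta)^2/L(x)$. I would invoke the Karamata/Feller representation already used in the Claim above: $L(x) = a(x)\exp\big(\int_1^x \ve(y)/y\, dy\big)$ with $a(x)\to c>0$ and $\ve(y)\to 0$. Then
\[
\frac{L(x^\delta)^2}{L(x)} = \frac{a(x^\delta)^2}{a(x)}\exp\!\left(2\int_1^{x^\delta}\frac{\ve(y)}{y}\,dy - \int_1^{x}\frac{\ve(y)}{y}\,dy\right),
\]
and the exponent is bounded in absolute value by $\big(\sup_{y\ge 1}|\ve(y)|\big)\big(2\delta\log x + \log x\big) = O(\log x)$, since $\int_1^{x^\delta} y^{-1}dy = \delta \log x$. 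Actually the sharper and cleaner bound is that for any $\eta>0$, $|\int_1^x \ve(y)/y\,dy|\le \eta \log x$ for all large $x$ (because $\ve(y)\to 0$), so $L(x^\delta)^2/L(x) = a(x^\delta)^2 a(x)^{-1} \cdot x^{O(\eta)}$, which is $x^{o(1)}$. Choosing $\eta$ small enough that $\alpha(1-2\delta) + (2\delta+1)\eta < 0$ — possible once $\delta > 1/2$ is fixed — gives $\bar F(x^\delta)^2/\bar F(x) \le C x^{-c}\to 0$ for some $c>0$, which is exactly $\bar F(h(x))^2 = o(\bar F(x))$.

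The main (very mild) obstacle is purely bookkeeping: making sure the slowly varying factor is absorbed into an arbitrarily small power of $x$ and that the chosen $\delta$ and $\eta$ are compatible with the strict negativity of the resulting exponent. There is no structural difficulty; once $\delta$ is taken in $(1/2,1)$ the dominant polynomial term $x^{\alpha(1-2\delta)}$ decays and everything else is subpolynomial. I would present the representation step, the $O(\log x)$ (indeed $o(\log x)$) bound on the integral, and then the final choice of constants, in that order.
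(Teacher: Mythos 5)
Your proof is correct and follows essentially the same route as the paper. The paper invokes the Potter-type bound (Feller, Ch.~8, Lemma~2), namely that $x^{-\ve} < L(x) < x^{\ve}$ for all large $x$, and then fixes the concrete choices $\delta > 3/4$, $\ve < \alpha/5$; you instead derive an equivalent subpolynomial bound on $L(x^\delta)^2/L(x)$ directly from the Karamata representation $L(x) = a(x)\exp\bigl(\int_1^x \ve(y)/y\,dy\bigr)$, and note that any $\delta \in (1/2,1)$ with $\eta$ small enough works. The only nitpick is that your bound $\bigl|\int_1^x \ve(y)/y\,dy\bigr| \le \eta\log x$ holds only up to an additive constant (from the initial segment where $|\ve|$ need not be small), but that constant is harmless since it is absorbed into the prefactor; the argument is sound.
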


\begin{proof}[\unskip\nopunct]
Lemma 2 of chapter 8 in \cite{feller2008introduction} ensures that for every $\ve > 0$, there exists $x_0$, such that for all $x>x_0$: $x^{-\ve} < L(x) < x^\ve$. Now one can check that by taking $\ve < \alpha/5$ and $1>\delta > 3/4$ the desired result follows:
\begin{equation}
\frac{\bar{F}(h(x))^2}{\bar{F}(x)} \leq x^{\ve(2\delta+1)-\alpha(2\delta-1)} \to 0, ~ \text{as} ~ x\to \infty.
\end{equation}

\end{proof}

\begin{proof}[\unskip\nopunct] I justify equation \eqref{eq:tailequiv} for the case of two random variables, $X_1$ and $X_2$, then the general case will follow by a straight induction. The argument goes through a similar line of proof as in \cite{foss2010sums}, but I am going to leverage the independence to relax some of its necessary conditions. The idea is to upper and lower bound $\BP\left[ X_1+X_2>x\right]$ by $\BP\left[X_1>x \right]+\BP\left[X_2>x \right]$ with some vanishing approximation errors (that are approaching 0 as $x\to \infty$, faster than $\bar{F}(x)$, henceforth denoted by $o(\bar{F}(x))$). First, the upper-bound is verified:

\begin{equation}
\label{upperbound}
\begin{split}
\BP\left[ X_1+X_2 > x\right] \leq \BP\left[X_1>x-h(x) \right]+\BP\left[X_2>x-h(x) \right]+\\ \BP\left[h(x) < X_1 \leq x-h(x), X_2 > x-X_1 \right]
\end{split}
\end{equation}

The first two terms can be approximated by leveraging assumptions \ref{ass1} and \ref{ass2} of the theorem. For example:

\begin{equation}
\BP\left[X_1>x-h(x) \right] \sim c_1\bar{F}(x-h(x)) \sim c_1 \bar{F}(x) \sim \BP\left[X_1>x \right]
\end{equation}

where the first and last approximations hold because of \ref{ass1}, and the middle one is guaranteed by \ref{ass2} and condition \ref{hcond}.
 Furthermore, the last probability will be of order $\bar{F}(x)$ as $x \to \infty$:

\begin{equation}
\begin{split}
\BP\left[h(x) < X_1 \leq x-h(x), X_2 > x-X_1 \right]& \leq \BP\left[h(x) < X_1 \leq x-h(x) \right] \BP\left[X_2 > h(x) \right]\\
& \leq \BP\left[ X_1>h(x)\right]\BP\left[ X_2>h(x)\right]\\
&\sim c_1c_2 \bar{F}(h(x))^2,
\end{split}  
\end{equation}

where the last term is of order $o(\bar{F}(x))$ because of lemma \ref{negl}, hence is negligible compared to the first two terms in equation \eqref{upperbound}, that concludes the upper bound.
Next, the lower bounding goes as:

\begin{equation}
\label{lower}
\begin{split}
\BP\left[ X_1+X_2>x\right] &\geq \BP\left[X_1> x+h(x), X_2>-h(x) \right]+ \BP\left[X_2> x+h(x), X_1>-h(x) \right] \\
&\BP\left[X_1>x+h(x),X_2>x+h(x) \right],
\end{split}
\end{equation}

where each of the first two terms decouples, and again because of presumptions \ref{ass1} and \ref{ass2} of the theorem, the first one for instance can be approximated as

\begin{equation}
\begin{split}
\BP\left[X_1> x+h(x), X_2>-h(x) \right] &= \BP\left[X_1> x+h(x)\right]\BP\left[ X_2>-h(x)\right]\\
&\sim c_1\bar{F}(x+h(x)) \sim c_1\bar{F}(x) \sim \BP\left[X_1>x \right].
\end{split}
\end{equation}

Similar reasoning implies that the third term in \eqref{lower} is of order $o(\bar{F}(x))$, therefore vanishing compared to the first two terms in \eqref{lower}. The lower bound is now justified, hence the first approximation in equation \eqref{eq:tailequiv} is concluded. Finally, approximation of the sum of tail probabilities with $\bar{F}$ follows immediately as a result of the first presumption of the theorem.
\end{proof}

\subsection{Proof of Theorem \ref{cat}}
\begin{proof}[\unskip\nopunct] First, the lower bound is shown:

\begin{equation}
\begin{split}
\BP\left[\max_{1 \leq i \leq N}X_i > x \right] &= 1-\BP\left[\max_{1 \leq i \leq N}X_i \leq x \right]\\
&=1-\prod_{i=1}^N \left(1-\BP\left[X_i > x \right]\right)\\
&\geq 1-\prod_{i=1}^N \exp\left( -\BP\left[X_i > x \right]\right) \\
&= 1-\exp\left(-\sum_{i=1}^N\BP\left[X_i > x \right] \right)\\
&=\sum_{i=1}^N \BP\left[X_i >x \right]+o(\bar{F}(x)),
\end{split}
\end{equation}

where the last equality is an immediate application of the Taylor's lemma. Showing the upper bound mainly falls in the same steps, but requires invoking the inequality $e^{-x} \leq 1-(1-e^{-1})x$, that holds for $x\in [0,1]$.\\

\begin{equation}
\label{upbnd}
\begin{split}
1-\prod_{i=1}^N \left(1-\BP\left[X_i > x \right]\right) &\leq 1-\prod_{i=1}^N \exp\left(-(1-e^{-1})^{-1} \BP\left[X_i>x \right]\right)\\
&= 1-\exp\left(-(1-e^{-1})^{-1}\sum_{i=1}^N\BP\left[X_i>x \right] \right)\\
&=(1-e^{-1})^{-1}\sum_{i=1}^N \BP\left[X_i >x \right]+o(\bar{F}(x))
\end{split}
\end{equation}

Through a graphical scheme it becomes clear that $e^{-x} \leq 1-ax$ for $a<1$, and small enough $x$. Therefore, it is possible to approach $a \uparrow 1$ and control for the size of all $\bar{F}_i(x)$, $i=1,\ldots,N$. Under the case where the convergence of $\bar{F}_i(x)/c_i\bar{F}(x)$ (in condition \ref{ass1} of theorem \ref{tailequiv}) is uniform over all $i=1,\ldots,N$, one can send $a$ to 1 from below slower than the speed of $\bar{F}(x)\to 0$, thereby a tighter upper bound will be obtained in \eqref{upbnd} with the pre-factor $1$ rather than $(1-e^{-1})^{-1}$. 
\end{proof}

\subsection{Proof of Theorem \ref{cmceff}}
\begin{proof}[\unskip\nopunct] 
To prove the proposition we need the following lemma, that paves the way for the main verification.
\begin{lemma}
\label{kl}
Let $S_n:=\sum_{k=1}^n \xi_k$, where $\xi_k$'s are iid Bernoulli random variables with success probability of $\alpha$, then

\begin{equation}
\BP \left[ S_n \leq  n \delta \right] \geq \frac{1}{n+1} e^{-n D\left(\frac{\lfloor n\delta \rfloor }{n} || \alpha\right)},
\end{equation}

where $D(\cdot || \cdot)$ is the Kullback-Leibler divergence, that is known to be 

\begin{equation}
D(\delta || \alpha) = \delta \log\left( \frac{\delta}{\alpha} \right) +(1-\delta) \log \left( \frac{1-\delta}{1-\alpha} \right).
\end{equation}

\end{lemma}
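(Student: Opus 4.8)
The plan is to reduce the left tail to its single largest atom and then apply the classical ``method of types'' lower bound for a binomial coefficient. Writing $k := \lfloor n\delta \rfloor$, the first step is just
\[
\BP\left[S_n \le n\delta\right] = \sum_{j=0}^{k} \binom{n}{j}\alpha^j(1-\alpha)^{n-j} \;\ge\; \binom{n}{k}\alpha^k(1-\alpha)^{n-k},
\]
so it suffices to bound this one term below by $\frac{1}{n+1}e^{-nD(k/n \,\|\, \alpha)}$.

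The key estimate I would establish is $\binom{n}{k} \ge \frac{1}{n+1}\left(\frac{n}{k}\right)^k\left(\frac{n}{n-k}\right)^{n-k}$ (with the usual conventions when $k\in\{0,n\}$). To get this, set $p = k/n$ and expand $1 = \bigl(p + (1-p)\bigr)^n = \sum_{j=0}^n \binom{n}{j}p^j(1-p)^{n-j}$: there are $n+1$ nonnegative terms, so the largest one is at least $\frac{1}{n+1}$. Looking at the ratio of consecutive terms, $\frac{\binom{n}{j+1}p^{j+1}(1-p)^{n-j-1}}{\binom{n}{j}p^j(1-p)^{n-j}} = \frac{(n-j)p}{(j+1)(1-p)} = \frac{(n-j)k}{(j+1)(n-k)}$, which exceeds $1$ for $j<k$ and is below $1$ for $j\ge k$; hence the maximizing term is exactly $j=k$, so $\binom{n}{k}p^k(1-p)^{n-k} \ge \frac{1}{n+1}$, which rearranges to the claimed inequality.

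Combining the two steps gives
\[
\binom{n}{k}\alpha^k(1-\alpha)^{n-k} \ge \frac{1}{n+1}\left(\frac{\alpha}{k/n}\right)^k\left(\frac{1-\alpha}{1-k/n}\right)^{n-k} = \frac{1}{n+1}\exp\!\left(-n\Big[\tfrac{k}{n}\log\tfrac{k/n}{\alpha} + \bigl(1-\tfrac{k}{n}\bigr)\log\tfrac{1-k/n}{1-\alpha}\Big]\right),
\]
and the bracketed quantity is precisely $D(k/n \,\|\, \alpha) = D\bigl(\lfloor n\delta\rfloor/n \,\|\, \alpha\bigr)$, which is the lemma.

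I expect no real obstacle here: the argument is short and self-contained once the type-counting bound on $\binom{n}{k}$ is in hand. The only care needed is with the degenerate cases $k=0$ or $k=n$ (and with $\delta$ outside $[0,1]$, which never occurs in the application), where one checks the one-term bound directly, since $\binom{n}{0}=\binom{n}{n}=1$ and the relevant $D$ collapses to $\log\frac{1}{1-\alpha}$ or $\log\frac{1}{\alpha}$. One could instead invoke Stirling's formula to estimate $\binom{n}{k}$, but the expansion-of-unity argument above is cleaner and produces exactly the advertised $\frac{1}{n+1}$ prefactor with no error terms to track.
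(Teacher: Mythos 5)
Your proof is correct and takes essentially the same route as the paper's: lower bound the left tail by its single term at $k=\lfloor n\delta\rfloor$, then use the expansion-of-unity (equivalently, the paper's auxiliary $\mathrm{Bin}(n,k/n)$ variable) to obtain the $\frac{1}{n+1}$ lower bound on $\binom{n}{k}(k/n)^k(1-k/n)^{n-k}$, and rearrange into the KL form. Your explicit ratio-of-consecutive-terms argument for locating the mode, and your check of the degenerate cases $k\in\{0,n\}$, make the same argument slightly more self-contained but do not change the approach.
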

\begin{proof}
For the notational simplicity let $m = \lfloor  n\delta \rfloor$, and $\tilde{\delta} = m/n$, then:

\begin{equation}
\label{binotail}
\begin{split}
\BP \left[ S_n \leq  n \delta \right] &= \sum_{k=0}^m \binom{n}{k} \alpha^k (1-\alpha)^{n-k}\\
&\geq \binom{n}{m} \alpha^m (1-\alpha)^{n-m} = \binom{n}{m} e^{n\left( \tilde{\delta} \log\alpha + (1-\tilde{\delta})\log(1-\alpha)\right)}
\end{split}
\end{equation}

Take the auxiliary binomial random variable $Y \sim \text{Bin}(n,\tilde{\delta})$, then $\BP\left[ Y=\ell\right]$ is maximized when $\ell =m= \lfloor n\delta \rfloor$. The following loose bound falls out for $\binom{n}{m}$:

\begin{equation}
\begin{split}
1 &= \sum_{\ell=0}^n \binom{n}{\ell} \tilde{\delta}^ \ell (1-\tilde{\delta})^{n-\ell}\\
&\leq (n+1) \binom{n}{m} \tilde{\delta}^m (1-\tilde{\delta})^{n-m}\\
&= (n+1)\binom{n}{m} e^{n\left( \tilde{\delta} \log\tilde{\delta} + (1-\tilde{\delta})\log(1-\tilde{\delta})\right)}
\end{split}
\end{equation}

Implying that $\binom{n}{m} \geq (n+1)^{-1} e^{-n\left( \tilde{\delta} \log\tilde{\delta} + (1-\tilde{\delta})\log(1-\tilde{\delta})\right)}$. Then, the proposed bound in the lemma drops out once this lower bound for $\binom{n}{m}$ is substituted in \eqref{binotail}.
\end{proof}
Now we can return to the proof of the proposition, first by finding the lower bound for deviation probability of $\hat{\mu}$:

\begin{equation}
\label{probdec}
\begin{split}
\BP\left[ |\hat{\mu}_n(x) - \mu(x)| > \kappa \mu(x)\right] = \BP\left[ \hat{\mu}_n(x) < (1-\kappa)\mu(x)\right]+\BP\left[ \hat{\mu}_n(x) > (1+\kappa)\mu(x)\right]
\end{split}
\end{equation}

The first term is lower bounded using the result of lemma \ref{kl} as:

\begin{equation}
\label{low}
\begin{split}
\BP\left[\hat{\mu}_n(x) < (1-\kappa)\mu(x) \right] &\geq \frac{1}{n+1}\exp\left\{-nD\left(\frac{\lfloor n(1-\kappa)\mu(x)\rfloor}{n}~||~ \mu(x) \right)\right\} \\ 
& \geq \frac{1}{n+1}\exp\left\{-nD\left((1-\kappa)\mu(x)-1/n ~||~ \mu(x)\right)\right\}
\end{split}
\end{equation}

In a same manner the second term in \eqref{probdec} is lower bounded, with this in mind that $1-\hat{\mu}_n(x)$ is the Binomial sample mean in its own turn, but with the different success probability of $1-\mu(x)$:

\begin{equation}
\label{up}
\begin{split}
 \BP\left[ \hat{\mu}_n(x) > (1+\kappa)\mu(x)\right] &= \BP\left[ 1-\hat{\mu}_n(x) < 1-(1+\kappa)\mu(x)\right]\\
&\geq \frac{1}{n+1} \exp\left\{ -nD\left(\frac{\lfloor n(1-(1+\kappa)\mu(x))\rfloor}{n}~ || ~ 1-\mu(x) \right)\right\}\\
&\geq \frac{1}{n+1} \exp\left\{ -nD\left(1-(1+\kappa)\mu(x)-1/n ~ || ~ 1-\mu(x) \right)\right\}   
\end{split}
\end{equation}

Denote the KL-divergences in the exponents of \eqref{low} and \eqref{up} with $D_1$ and $D_2$, respectively. Then, the convexity of $x \mapsto e^{-nx}$ implies:

\begin{equation}
\label{econv}
\begin{split}
\BP\left[ |\hat{\mu}_n(x) - \mu(x)| > \kappa \mu(x)\right] &\geq \frac{1}{n+1}\left(e^{-nD_1}+e^{-nD_2} \right) \\ &\geq \frac{2}{n+1}e^{-n(D_1+D_2)/2}
\end{split}
\end{equation}

Then it is left to simplify and find an upper bound for $D_1+D_2$, which is mainly carried out by leveraging the inequality: $ x \geq \log(1+x) $ for $x\in(-1,1)$.

\begin{equation}
\label{D1}
\begin{split}
D_1 &= \left( (1-\kappa)\mu(x)-1/n \right) \log\left(\frac{(1-\kappa)\mu(x)-1/n}{\mu(x)} \right)+\\
&\left(1-(1-\kappa)\mu(x)+1/n\right) \log\left(\frac{1-(1-\kappa)\mu(x)+1/n}{1-\mu(x)} \right) \\ 
&\leq  \left( (1-\kappa)\mu(x)-1/n \right) \left(-\kappa-\frac{1}{n\mu(x)} \right) + \left(1-(1-\kappa)\mu(x)+1/n\right)  \left(\frac{\kappa \mu(x)+1/n}{1-\mu(x)} \right),
\end{split}
\end{equation}

and

\begin{equation}
\label{D2}
\begin{split}
D_2 &= \left( 1-(1+\kappa)\mu(x)-1/n \right) \log\left( \frac{1-(1+\kappa)\mu(x)-1/n}{1-\mu(x)}\right)+\\
& \left( (1+\kappa)\mu(x)+1/n\right) \log\left(\frac{(1+\kappa)\mu(x)+1/n}{\mu(x)} \right)\\
&\leq \left( 1-(1+\kappa)\mu(x)-1/n \right) \left(-\frac{\kappa \mu(x)+1/n}{1-\mu(x)} \right) + \left( (1+\kappa)\mu(x)+1/n \right) \left(\kappa+\frac{1}{n\mu(x)} \right).
\end{split}
\end{equation}

Therefore, the following upper bound on $D_1+D_2$ falls out by adding up \eqref{D1}, and \eqref{D2}:

\begin{equation}
D_1+D_2 \leq 2 \frac{\left( \kappa \mu(x)+1/n\right)^2}{\mu(x)(1-\mu(x))}
\end{equation}

After substitution of this bound in \eqref{econv}, it follows

\begin{equation}
\BP\left[ |\hat{\mu}_n(x) - \mu(x)| > \kappa \mu(x)\right] \geq \frac{2}{n+1}\exp\left\{  \frac{-n\left(\kappa \mu(x)+1/n\right)^2}{\mu(x)(1-\mu(x))}\right\}.
\end{equation}

By using the upper bound on deviation probability of $\bar{Z}_n(x)$ in \eqref{subbound}, we can see that

\begin{equation}
\begin{split}
\log\left( \frac{\BP\left[ \left|\bar{Z}_n(x) - \mu(x) \right| > \kappa \mu(x) \right]}{\BP\left[ \left|\hat{\mu}_n(x) - \mu(x) \right| > \kappa \mu(x) \right]}\right)  &\leq (n+1)\exp \left\{ \frac{-2n\kappa^2 \mu(x)^2}{\left( \sum_{i=1}^N \bar{F}_i(x/N)\right)^2} \right. + \\ 
&\left. \frac{n\kappa^2 \mu(x)}{1-\mu(x)}+\frac{1}{n\mu(x)(1-\mu(x))} + \frac{2\kappa}{1-\mu(x)}\right\}\\
&\leq (n+1) \exp\left\{ -n\kappa^2\left(\frac{2}{N^{2\alpha}}-\mu(x)+o(\mu(x))\right)+\right. \\
&\left. 2\kappa + o_x(1) + \frac{1}{n\mu(x)}(1+o_x(1))\right\},
\end{split}
\end{equation}

where in the last equation, I used the large $x$ asymptotic. Now, for any rate $r$ smaller than $2\kappa^2 N^{-2\alpha}$, $x$ can be taken large enough, so that the ratio of deviation probabilities decays faster than $e^{-rn}$. Consequently, the ratio of CMC estimator deviation probability over its crude Monte-Carlo counterpart decays exponentially in $n$, pointing to the claim of theorem \ref{cmceff}.
\end{proof}

\subsection{Proof of Proposition \ref{marketthm}}
\begin{proof}[\unskip\nopunct]
The result of theorem \ref{tailequiv} can be employed again to asymptotically approximate the deviation sum with sum of deviations:

\begin{equation}
\label{noisedev}
\BP\left[ \sum_{i=1}^M \ve_i > Mx \right] \sim \sum_{i=1}^M \BP\left[\ve_i > M x \right] \sim \left( \sum_{i=1}^M c_i \right) \frac{L(Mx)}{(Mx)^\alpha},
\end{equation}

The proportionality coefficients $c_i$ are defined in the usual fashion: $c_i= \lim_{x \to \infty}\frac{\BP\left[\ve_i>x\right] }{\bar{F}(x)}$, and as stated in the theorem are uniformly bounded by a constant say $c$. Therefore, relation \eqref{noisedev} can be upperbounded as:

\begin{equation}
\left( \sum_{i=1}^M c_i \right) \frac{L(Mx)}{(Mx)^\alpha} \leq \frac{c L(Mx)}{M^{\alpha-1} x^{\alpha}} \longrightarrow 0, ~~ \text{as} ~ M\to \infty
\end{equation}

The last conclusion holds because $L(\cdot)$ is slowly varying by definition and grows at a slower rate than any polynomial growth of $M$ (knowing that it is assumed $\alpha>1$).
\end{proof}

\section{Algorithm Complexity}
\label{app:complexity} 
As claimed in \ref{rm:algoComplexity}, our proposed CMC algorithm has almost the same time complexity w.r.t the sample size $n$. Recall that the added complexity in our algorithm is only a result of evaluating a univariate distribution $N$ times, due to \ref{catasprop}, which does not scale with $n$ and these evaluations can be done in $O(1)$ time using fast methods or hash tables in case of distributions with sparse support.



\section{Simulations}
Simulations in this paper are carried out using \texttt{Betta} package addressed in \ref{suppA}. In this section, for the sake of reproducibility, the settings under which the simulations in Section \ref{sec: simulations} were carried out are explained in detail.

Some parameters were kept constant between the simulations. The number of factors in each model, $N$, was set to 10. Note that since an analytical solution for LDP estimation is not available in our case, we need to rely on stochastic simulations to estimate $\mu$ more accurately. After several experiments with our CMC estimator, we observed that as the sample size increases, estimation variance decreases and the mean estimate stays very close to the mean estimates using crude MC. Therefore, in order to estimate $\mu$, we used the CMC estimator but with a very large, $n = 1e7$ sample size. As for the precision parameter $\kappa$ we set it to $5e-3$ after many experiments. $\kappa$ should be small enough such that the difference between the estimators becomes more clear and large enough such that simulations do not end up with \textsc{Nan}'s due to occurrences of $\log(0)$, especially with the MC estimator.

\subsection{Simulation \ref{subsection:varC}}
In this simulation $x$ was changed from 100 to 1000 with steps of length 100. $\bm{\alpha} \in \BR^{10}$ was set to values scattered equidistantly between 1 and 3.

\subsection{Simulation \ref{subsec:catPr}}
Here for the simulation where the minimum shape parameter was different between models, Figure \ref{fig:minVar}, $\bm{\alpha} \in \BR^{10}$ was chosen equidistantly between $[e, e+1]$ for $e$ chosen from a grid of length 10 uniformly placed in $[1,5]$. The same instructions were used in the constant minimum shape parameter simulation of Figure \ref{fig:minConst}; however, for each model, the first element of $\bm{\alpha}$ was dropped and 1 was appended to the vector. Then the elements of the vector, except the first element, were modified to keep a constant mean $\alpha$ among the models.

\subsection{Estimating r}
\label{subsection:mixed}
In order to validate equation \ref{relEff}, it is sufficient to show that the convex hull of $\left(n_i, \log(\Lambda)_i\right)$ is bounded above by an affine function $f(n_i) = -rn_i$ where $r > 0$.

For each $n_i$, $\log(\Lambda)$ is evaluated 50 times, for all of which we use a single estimate for $\mu$. Therefore we used a linear mixed-effects model to estimate $r$. This way any grouping effect is considered as a random effect. Here is the model used:
\begin{equation}
    \label{eq:mixed}
    Y_{ij} = \beta_0 + \beta_1 X_{ij} + \gamma_{1i} X_{ij} + \epsilon_{ij}
\end{equation}

In that $i$ is the group corresponding to $n_i$. REML was used to estimate the coefficients.


\begin{supplement}

\sname{Supplement A}\label{suppA}
\stitle{\texttt{Betta} Package}
\slink[url]{https://github.com/osolari/betta}

\sdescription{\texttt{Betta} is a python package developed for the purposes of this paper. Upon installation of the package, objects of class \texttt{CMC} will be available. These objects may be input to the methods in the \texttt{relativeEfficiencyLib} module which contains the methods used for creating the results in section \ref{sec: simulations}.}
\end{supplement}
 
\clearpage

\bibliographystyle{imsart-nameyear}
\bibliography{ref}

\end{document}